\definecolor{aleacolor}{rgb}{0.16,0.59,0.78}
\renewcommand{\cite}{\citet}
\theoremstyle{plain}
\newtheorem{theorem}{Theorem}[section]
\newtheorem{corollary}[theorem]{Corollary}
\theoremstyle{definition}
\newtheorem{definition}[theorem]{Definition}
\theoremstyle{remark}
\newtheorem{remark}[theorem]{Remark}
\makeatletter \@addtoreset{equation}{section} \makeatother
\newcommand{\aleaIndex}[1]{\href{http://alea.impa.br/english/index_v#1.htm}{\bf #1}}
\newcommand{\F}{\mathcal F}
\newcommand{\N}{\mathcal N}
\newcommand{\Z}{\mathbb Z}
\newcommand{\X}{\mathbf X}
\newcommand{\rt}{\rightarrow}
\newcommand{\lrt}{\longrightarrow}
\newcommand{\bc}{\begin{center}}
\newcommand{\ec}{\end{center}}
\newcommand{\ds}{\displaystyle}
\newcommand{\vphi}{\varphi}
\newcommand{\D}{\mathrm{d}}
\newcommand{\I}{\mathrm{i}}
\newcommand{\norm}[1]{\left\Vert#1\right\Vert}
\newcommand{\abs}[1]{\left\vert#1\right\vert}
\newcommand{\R}{\mathbb R}
\newcommand{\eps}{\varepsilon}
\newcommand{\E}{\mathrm{e}}
\begin{document}

\title[Distributional properties and parameters estimation of GSB process...]{Distributional properties and parameters estimation of GSB Process: An approach based on characteristic functions}

\author[V. Stojanovi\'c et al.]{Vladica Stojanovi\'c}
\author[]{Gradimir V. Milovanovi\'c}
\author[]{Gordana Jeli\'c}

\address{Faculty of Sciences and Mathematics,\newline
University of Kosovska Mitrovica\newline
Lole Ribara 29\newline
40\,000 Kosovska Mitrovica, Serbia.}
\email{vladica.stojanovic@pr.ac.rs}

\address[]{Serbian Academy of Sciences and Arts\newline
Kneza Mihaila 35\newline
11\,000 Belgrade, Serbia\newline
\& State University of Novi Pazar\newline
Vuka Karad\v zi\'ca bb\newline
36\,300 Novi Pazar, Serbia.\newline
\url{http://www.mi.sanu.ac.rs/~gvm/}}
\email{gvm@mi.sanu.ac.rs}

\address{Faculty of Technical Sciences,\newline
	University of Kosovska Mitrovica\newline
	Knez Milo\v sa 7\newline
40\,000 Kosovska Mitrovica, Serbia.}
\email{gordana.jelic@pr.ac.rs}

\thanks{The second author  was supported by the Serbian Ministry of Education, Science and Technological Development (No. \#OI\,174015).}

\subjclass[2010]{62M10, 91B84, 65D30.}
\keywords{GSB process, STOPBREAK process, Noise Indicator, Split--MA process, Empirical characteristic function estimation.}

\begin{abstract}
 A general type of a Split-BREAK process with Gaussian innovations (henceforth, Gaussian Split-BREAK or GSB process) is considered. The basic stochastic properties of the model are studied and its characteristic function derived. A procedure to estimate the parameter of the GSB model based on the Empirical Characteristic Function (ECF) is proposed. Our simulations suggest that the proposed method performs well compared to a Method of Moment procedure used as benchmark. The empirical use of the GSB model is illustrated with an application to the time series of total values of shares traded at Belgrade Stock Exchange.
 \end{abstract}

\maketitle

\section{Introduction}
\label{sec:1}

The so-called STOchastic Permanent BREAKing (STOPBREAK) process, firstly introduced by \cite{STOPBREAK}, was successfully used in modeling  time series with ``large shocks", which have permanent, emphatic fluctuations in their dynamics. After this initial contribution, the STOPBREAK notion has been considered by several authors. For instance, in \citet{Diebold} or \citet{Gonzalo&Martinez} some modifications of the STOPBREAK process were discussed, while \citet{Huang&Fok} and \citet{Kapet&Tzav} investigated applications of the processes of this type. Finally, some new extensions in modeling structural breaks and ``large shocks" in empirical time series dynamics can be found, for instance, in \citet{Dendramis1,Dendramis2} or \citet{Hassler}. In those papers, different empirical applications of STOPBREAK-based models were investigated: economic stability of the oil market, relationships between daily closing market indexes of different countries, etc.

\citet{Split-BREAK,GSB1,GSB2} proposed an extension of the STOPBREAK process, where the so-called noise threshold indicator was set. The model obtained in this way, named the Split-BREAK process, represents a generalization of the basic STOPBREAK process, but also includes, in some special cases, other well-known time series models (see the following Section).
In this paper, the Gaussian distribution of the innovations of Split--BREAK process is assumed, and this process was named \textit{the Gaussian Split-BREAK $($GSB$)$ process}. A brief theoretical background, i.e., a definition and the basic stochastic properties of the process, are given in Section \ref{sec:2}. The main results of the paper are presented in the sections to follow. In Section \ref{sec:3}, we pay a special attention to the series of increments, named \emph{the Split--MA process}. First of all, a general explicit expression of an arbitrary order characteristic functions (CFs) of this process is given. Based on this, we investigate some distributional properties of the Split-MA process.
In Section \ref{sec:4}, we consider an estimation procedure of the Split-MA process parameters, using \textit{the Empirical Characteristic Functions $($ECF$)$ technique}. In the same section, we also study the asymptotic properties of these estimators. The numerical simulations of the ECF estimators are considered in Section \ref{sec:5}. An application of the GSB process and the ECF estimation procedure in modeling the dynamics of the total values of shares trading on Belgrade Stock Exchange are described in Section \ref{sec:6}. Finally, some conclusions are presented in Section \ref{sec:7}.

\section{GSB process. Definition and main properties}
\label{sec:2}

To begin with, we define a general form of the GSB process, as well as its basic stochastic properties.

\begin{definition}\label{GSB-def}\em Let $(Y_t)$, with $t\in\mathbb{Z}$ be a time series of random variables on some probability space $(\Omega, \F, P)$. Let  $F=(\F_t)$ be a
	filtration and denote by $(\eps_t)$ a sequence  of independent identical distributed (i.i.d.) Gaussian $\N(0,\sigma^2)$ random variables, adapted to the
	filtration $F$. We say that series $(Y_t)$ obeys a Gaussian Split-BREAK $(GSB)$ process, if and only if it satisfies
	\begin{equation}\label{GSB-Eq.1}
	A(L) Y_t=B(L)q_t \eps_t+C(L)(1-q_t)\eps_t,
	\end{equation}
	where
	\begin{equation}\label{Ind}
q_t \equiv	q_t(c)=I(\eps_{t-1}^2>c)=\left\{%
	\begin{array}{ll}
	1, & \hbox{$\eps_{t-1}^2>c$} \\[1mm]
	0, & \hbox{$\eps_{t-1}^2\leq c$},
	\end{array}%
	\right.
	\end{equation}
	is \textit{the Noise Indicator} of the (previous) realizations of $(\eps_t)$, $A(L)=1-\sum_{i=1}^m \alpha_i L^i$,
	$B(L)=1-\sum_{j=1}^n \beta_j L^j$, 	$C(L)=1-\sum_{k=1}^p \gamma_k L^k$, and $L$ is a back--shift operator.
\end{definition}

Notice that, according to (\ref{Ind}), $$E\big(q_t\eps_t|\F_{t-1}\big)=q_tE\big(\eps_t|\F_{t-1}\big)=0,$$
i.e. the sequence $(q_t\,\eps_t)$ is a martingale difference, as in the basic STOPBREAK model. Moreover, the values of the series $(q_t)$ determine the amount of participation of the previous elements of innovations $(\eps_t)$  in the series $(Y_t)$. The level of realizations of the innovations $(\eps_t)$ which will be statistically significant to be included in (\ref{Ind}) was determined with the parameter $c > 0$. In that way, the GSB process varies between the well known linear stochastic models  (see, for more details \citealp{GSB1, GSB2}). In the dependence of  $A(L),\;B(L)$ and $C(L)$ we have, for instance, the following processes:
$$\begin{array}{rll}
A(L)=B(L)=C(L)=1:&  Y_t=\eps_t&\textrm{(White
	Noise)}\\\vspace{.15mm}
A(L)=1, B(L)=C(L)\neq 1:&  Y_t=B(L)\eps_t
&\textrm{(MA model)}\\\vspace{.15mm}
A(L)\neq 1, B(L)=C(L)= 1:& A(L) Y_t=\eps_t&
\textrm{(AR model)}\\\vspace{.15mm}
A(L)\neq 1, B(L)=C(L)\neq 1:& A(L) Y_t=B(L)\eps_t&
\textrm{(ARMA model)}.\vspace{.15mm}
\end{array}$$

Subsequently, we suppose that $A(L)=C(L)\neq 1$ and $B(L)=1,$ when the model (\ref{GSB-Eq.1}) has the form
\begin{equation}\label{GSB-Eq.2}
Y_t-\sum_{j=1}^p \alpha_j\,
Y_{t-j}=\eps_t-\sum_{j=1}^p\alpha_j\,
\theta_{t-j}\,\eps_{t-j}, \qquad t\in \mathbb{Z},
\end{equation}
where $\alpha_j\geq0, \; j=1,\dots,p$ and
$\theta_t=1-q_t=I(\eps_{t-1}^2 \leq c)$ satisfy the non--triviality condition $b_c:=P\{\eps_{t-1}^2\leq c\}\in(0,1)$.
Thus, the representation (\ref{GSB-Eq.2}) can be considered as a specific non-linear ARMA model with the ``temporary" components $(\theta_{t-j}\eps_{t-j})$, which imply the specific structure of the GSB process.
It was proven in \citet{GSB1} that the necessary and sufficient conditions of strong stationarity of the series $(Y_t)$ are that the zeros $\lambda_1, \ldots, \lambda_p$ of the polynomial
$P(\lambda)=\lambda^p - \sum_{j=1}^p \alpha_j\;
\lambda^{p-j}$
satisfy the conditions
$|\lambda_j|<1$, $j=1,\dots, p$, i.e., that inequality $\sum_{j=1}^p \alpha_j < 1$ holds. In this case, the mean of the GSB series $(Y_t)$ is $E(Y_t)=0$, and its covariance function $\gamma_{_Y}(h)=E\big(Y_{t+h}\, Y_t\big),$ $h\geq 0$ satisfies
recurrence relation
\[\gamma_{_Y}(h)-\sum_{j=1}^p \alpha_j \Big[ \gamma_{_Y}(h-j)-s(h-j)I(h-j>0)\Big]=\left\{%
\begin{array}{ll}
\sigma^2, & \hbox{$h=0$},\\
0, & \hbox{$h\neq 0$},\\
\end{array}%
\right.
\]
where
\[s(h)=\left\{
\begin{array}{ll}
\sum_{j=1}^p \alpha_j\, s(h-j),& \hbox{$h\geq p$},\\[1mm]
\sum_{j=1}^h \alpha_j\, s(h-j), & \hbox{$h=1,\dots,p-1$},\\[1mm]
b_c\sigma^2, & \hbox{$h=0$}.\\[1mm]
\end{array}
\right.\]

On the other hand, similarly to the basic STOPBREAK process, the equality (\ref{GSB-Eq.2}) enables the additive decomposition $Y_t=m_t+\eps_t,$ where
\[
m_t=\sum_{j=1}^p \alpha_j \big(Y_{t-j}-\theta_{t-j}
\eps_{t-j}\big)=\sum_{j=1}^p \alpha_j \big(m_{t-j}+q_{t-j}
\eps_{t-j}\big)
\]
is a sequence of random variables, called as \emph{the
martingale means}. In this way, the series $(m_t)$ represents a generalization of an analogue series in \cite{STOPBREAK}. It can be easily shown that $(m_t)$ satisfies the same stationarity conditions as the series $(Y_t)$.  Moreover, regardless to stationary of the series $(Y_t)$ and $(m_t)$, it follows
\begin{equation}\label{PSB3}
E\big(Y_t \mid \F_{t-1}\big)=m_t+E\big(\eps_t \mid
\F_{t-1}\big)=m_t, \quad t\in\mathbb Z,
\end{equation}
and based on that, we have
$ E(Y_t) = E(m_t)=\mu(= {\rm const})$. In a similar way, the variance of GSB process can be obtained. According to the equality
\begin{equation}\label{PSB4}
Var\big(Y_t \mid \F_{t-1}\big)=E\big(Y_t^2 \mid \F_{t-1}\big)-m_t^2=\sigma^2,
\end{equation}
the conditional variance of $(Y_t)$ is constant and equal to the variance of $(\eps_t)$, and the variances of $(m_t)$ and $(Y_t)$ satisfy the relation $Var(Y_t)=Var(m_t)+\sigma^2$. Note that the equalities (\ref{PSB3}) and (\ref{PSB4}) can explain the stochastic behavior of the series $(Y_t)$. Namely, the sequence $(m_t)$ is predictable, and it represents a  stability component of the process $(Y_t)$. On the other hand, realizations of the innovations $(\eps_t)$ represent the random fluctuations around the values $(m_t)$ (see Figure \ref{Fig.1}, left).

We next describe the stochastic
structure of another time series, the so--called increments $X_t:=A(L) Y_t$, $t\in \mathbb{Z}$. According to (\ref{GSB-Eq.2}), this series can be written as
\begin{equation}\label{Xt}
X_t=\eps_t-\sum_{j=1}^p \alpha_j \,\theta_{t-j} \eps_{t-j},
\quad t \in \mathbb{Z}.
\end{equation}
Therefore, $(X_t)$ has the multi-regime structure, depending on the realizations of indicators $(\theta_t)$. If all squared innovations $\eps_{t-j}$ are sufficiently large (i.e. greater than $c$), an increment $X_t$ will be equal to $\eps_t$. On the other hand, squared innovations, which do not exceed the critical value $c$, produce a ``part of" MA$(p)$ representation of the series $(X_t)$ (Figure \ref{Fig.1}, right). Due to this, $(X_t)$ is named \emph{the $($Gaussian$)$ Split-MA model $($of order p$)$}, or simply \emph{the Split-MA$(p)$ model}.

Note that $(X_t)$ is a stationary process, with the mean $E(X_t)=0$ and the covariance
\begin{equation}\label{Cov X}
\gamma_{_X} (h)=E(X_tX_{t+h})=\left\{%
\begin{array}{ll}
\sigma^2\Big(1+b_c\sum_{j=1}^p \alpha_j^2\Big), & \hbox{$h=0$}, \\[1.5mm]
\sigma^2\,b_c\Big(\sum_{j=1}^{p-h}\alpha_j\,\alpha_{j+h}-\alpha_{h}\Big), & \hbox{$1\leq h\leq p-1$},\\[1.5mm]
-\sigma^2\,b_c\alpha_p\,, & \hbox{$h=p$},\\[1.5mm]
0, & \hbox{$h>p$}. \end{array}%
\right.
\end{equation}
In \citet{GSB1} it was shown that the process $(X_t)$ is invertible if and only if the zeros $r_1$, $\ldots$, $r_p$ of the polynomial
$
Q(\lambda)=\lambda^p-b_c\sum_{j=1}^p \alpha_j \lambda^{p-j}
$
meet the condition $|r_j|<1$, $j=1,\dots p$, i.e., the inequality
$b_c \sum_{j=1}^p\alpha_j<1$ holds. Then,
\begin{equation}\label{Invert}
\eps_t=\sum\limits_{k=0}^\infty \omega_{k}(t)\,X_{t-k}, \quad t\in
\Z,
\end{equation}
where
\begin{equation}\label{Omega}
\omega_{k}(t)=\left\lbrace
\begin{array}{ll}
\theta_{t-k}\sum_{j=1}^p \alpha_j\,
\omega_{k-j}(t), & k\geq p,\\
\theta_{t-k}\sum\limits_{j=1}^k \alpha_j \,
\omega_{k-j}(t), & 1\leq k \leq p-1,\\
1, & k=0.
\end{array}
\right.
\end{equation}
\citet{GSB1} proved that the representation $(\ref{Invert})$ is almost surely unique, as well as that the sum on the right side converges with the probability one and in the mean--square.

According to the aforementioned facts, the conditions of invertibility of increments $(X_t)$ are weaker than the stationary conditions of the series $ (Y_t) $ and $ (m_t) $.
This is particularly interesting in the case of the so--called
{\it integrated $($standardized$)$} time series, where
$
\sum_{j=1}^p \alpha_j=1.
$
Further, we take into consideration time series of this type, for the following reasons:

(i) In this case, the series $(Y_t)$ and $(m_t)$ are non--stationary processes, with the non--zero mean.
These properties are typical in dynamics of the time series with ``large shocks", as well as in their practical applications.

(ii) If the parameter $b_c$  takes a non--trivial values, i.e., $b_c \in(0,1)$, the series $(X_t)$ will be stationary and invertible. Then, the whole estimation procedure of unknown parameters, as we see further, will be based just on its realizations.

(iii) Finally, the assumption
$ \sum_{j=1}^p \alpha_j=1 $ is fully in line with the definition of basic STOPBREAK process in \cite{STOPBREAK}, where $p = \alpha_1 = 1$.

Figure  \ref{Fig.1} illustrates the dynamics of the kind of time series defined above, in the case of the simplest, the GSB(1) and Split-MA(1) processes, where the model's parameters are $\alpha_1=c=\sigma=1$.\\

\begin{figure}[hbtp]
	\includegraphics[width=1\textwidth]{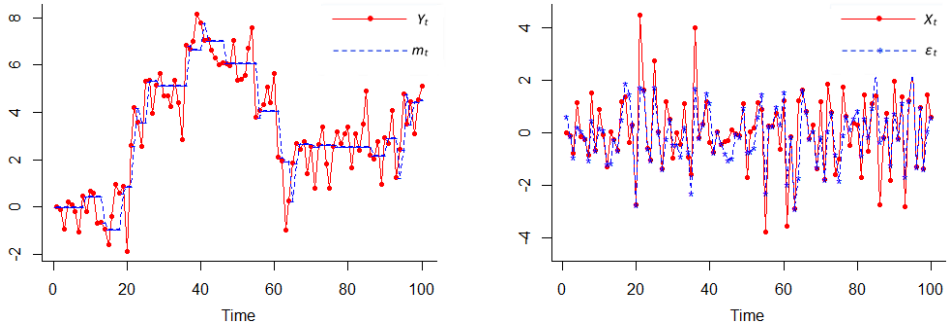}
	\caption{Comparative graphs of the basic GSB series.}\label{Fig.1}
\end{figure}

\section{Distributional properties of Split--MA process}
\label{sec:3}

In this section, we consider some distributional properties of increments $(X_t)$. For this purpose, we define the parameter vector  $\theta=(\alpha_1,\dots,\alpha_p,b_c, \sigma^2)'$. Note that the critical value $c$ can be easily obtained as $c=\sigma^2F^{-1}(b_c)$, where $F(x)$ is the cumulative distribution function (CDF) of $\chi_1^2$ distributed random variable. We now introduce the following
definition:
\begin{definition}
Let $\mathbf{u}=(u_1,\dots,u_\ell)'\in\mathbb{R}^\ell$ and $\X_t^{(l)}:=(X_t,\dots, X_{t+\ell-1})'$, $t\in \Z$, be the overlapping blocks of the process $(X_t)$. \textit{The $\ell$--dimensional CF} of the random vector $\X_t^{(\ell)}$ is
\begin{equation}\label{Eq.3.1}
	\varphi_X^{(\ell)}(\mathbf{u};\theta):=E\left[\exp\left(\I\mathbf{u}'\X_{t}^{(\ell)} \right)\right]=E\Bigg[\exp\bigg(i\sum_{j=1}^\ell u_j X_{t+j-1}\bigg)\Bigg].
\end{equation}
\end{definition}

The following statement gives an explicit expression of the CF of Split--MA$(p)$ process.
\begin{theorem}\label{Thm:1}
	Let $(X_t)$ be the Split-MA(p) process defined by $(\ref{Xt})$. Then, CF of the order $\ell\in\mathbb{N}$ of the random process $(\X_t^{(\ell)})$ is given by
\begin{eqnarray}\label{Eq.3.2}
 \varphi_X^{(\ell)}(\mathbf{u};\theta)&=&
\exp\bigg(\frac{-\sigma^2u_M^2}{2}\bigg)\times
\prod_{j=1}^{M-1}\Bigg[(1-b_c)\exp\bigg(\frac{-\sigma^2 u_j^2}{2}\bigg)
\nonumber\\
&&
 +b_c \exp\bigg(\frac{-\sigma^2}{2}\bigg(u_j-\sum_{k=1}^{M-j}\alpha_k u_{k+j}\bigg)^2\bigg)\Bigg]\nonumber\\
&&\times \prod_{j=1}^{M}\Bigg[1-b_c+b_c \exp\bigg(\frac{-\sigma^2}{2}\bigg(\sum_{k=j}^{M}\alpha_k u_{k-j+1}\bigg)^2\bigg)\Bigg],
\end{eqnarray}
where $M=\max\{p,\ell\}$ and $u_{\ell+j}=\alpha_{p+k}=0$, $j=\ell+1,\dots,M$, $k=p+1,\dots,M$.
\end{theorem}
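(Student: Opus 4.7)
The plan is to expand $\sum_{j=1}^\ell u_j X_{t+j-1}$ as a linear combination of the independent Gaussian innovations $(\varepsilon_r)$ and then integrate the complex exponential one innovation at a time, exploiting the independence of each indicator $\theta_r$ from the particular innovation $\varepsilon_r$ that it multiplies.

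Adopting the padding conventions $u_j=0$ for $j\notin\{1,\dots,\ell\}$ and $\alpha_k=0$ for $k\notin\{1,\dots,p\}$ and substituting the Split-MA representation (2.4), I would first collect the coefficient of each innovation to obtain
\begin{equation*}
\sum_{j=1}^\ell u_j X_{t+j-1} \;=\; \sum_{s=-p}^{\ell-1}\bigl(u_{s+1}-\theta_{t+s}A_s\bigr)\varepsilon_{t+s},\qquad A_s:=\sum_{k=1}^{p}\alpha_k\,u_{s+k+1}.
\end{equation*}
This decomposition identifies three regimes. The head $s=\ell-1$ satisfies $A_{\ell-1}=0$ (since $u_{\ell+k}=0$ for $k\geq 1$), so the coefficient of $\varepsilon_{t+\ell-1}$ is the deterministic $u_\ell=u_M$. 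For the middle indices $s=0,\dots,\ell-2$, setting $j=s+1\in\{1,\dots,M-1\}$ gives $A_s=\sum_{k=1}^{M-j}\alpha_k u_{k+j}$. For the tail indices $s=-1,\dots,-p$, setting $j=-s\in\{1,\dots,p\}$ gives $u_{s+1}=0$ and $A_s=\sum_{k=j}^{M}\alpha_k u_{k-j+1}$, so the whole coefficient reduces to $-\theta_{t+s}A_s$.

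The per-innovation step uses the fact that $\theta_{t+s}=I(\varepsilon_{t+s-1}^2\leq c)$ is measurable with respect to $\varepsilon_{t+s-1}$ and therefore independent of $\varepsilon_{t+s}$, so conditionally on $\theta_{t+s}$ the innovation $\varepsilon_{t+s}$ is still $N(0,\sigma^2)$. The Gaussian CF then yields
\begin{equation*}
E\!\left[\exp\!\bigl(i(u_{s+1}-\theta_{t+s}A_s)\varepsilon_{t+s}\bigr)\,\big|\,\theta_{t+s}\right] = \exp\!\left(-\tfrac{\sigma^2}{2}\bigl(u_{s+1}-\theta_{t+s}A_s\bigr)^2\right),
\end{equation*}
and marginalising over $\theta_{t+s}\in\{0,1\}$ with weights $1-b_c$ and $b_c$ produces a two-term mixture. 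For the head index this collapses to $\exp(-\sigma^2 u_M^2/2)$, reproducing the prefactor of (3.2). For the middle indices it matches exactly the generic factor of the first product. For the tail indices, where $u_{s+1}=0$, it simplifies to $1-b_c+b_c\exp(-\sigma^2 A_s^2/2)$, matching the generic factor of the second product; the padding conventions make the extra factors with $j>p$ or $j>\ell-1$ equal to $1$, so they contribute nothing.

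The delicate step, and the main obstacle, is that although each pair $(\theta_{t+s},\varepsilon_{t+s})$ is independent, the sequences $(\theta_{t+s})_s$ and $(\varepsilon_{t+s})_s$ are not jointly independent: each innovation $\varepsilon_{t+s}$ is both a multiplicative factor in its own term and the argument of the neighbouring indicator $\theta_{t+s+1}$ appearing in the next term. To justify the product form (3.2) I would therefore carry out the expectation by iterated conditioning, peeling off one innovation at a time from $s=\ell-1$ downward, and at each step splitting the remaining integrand on $\{\varepsilon_{t+s}^2\leq c\}$ versus $\{\varepsilon_{t+s}^2>c\}$ so as to freeze the indicator $\theta_{t+s+1}$ before integrating out $\varepsilon_{t+s}$. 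Verifying that these nested integrals collapse onto the advertised product — equivalently, that each innovation's effective contribution reduces to the Bernoulli mixture identified above, with no residual cross-terms from the shared $\varepsilon$'s — is the technical core of the argument.
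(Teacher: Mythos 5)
Your decomposition of $\sum_{j=1}^\ell u_jX_{t+j-1}$ into a head, middle and tail regime is exactly the one the paper uses (its random exponent $\mathcal{L}_X^{(\ell)}$), and your per--innovation conditional computations are correct as far as they go. But the step you defer at the end as ``the technical core'' is not a verification to be filled in later: it is precisely where the argument breaks, and your iterated--conditioning scheme does not close it. After you split on $\{\eps_{t+s}^2\le c\}$ versus $\{\eps_{t+s}^2>c\}$ to freeze $\theta_{t+s+1}$ and then integrate out $\eps_{t+s}$, the multiplicative occurrence of $\eps_{t+s}$ is integrated over a truncated Gaussian, producing terms of the form $E\big[\E^{\I b\eps}I(\eps^2\le c)\big]$ rather than $b_c\,\E^{-\sigma^2b^2/2}$; since $\E^{\I b\eps}$ and $I(\eps^2\le c)$ are functions of the same Gaussian, they are correlated and the residual cross--terms do not cancel. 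Concretely, for $p=\alpha_1=1$ and $\ell=2$ a direct moment computation gives $E(X_t^2X_{t+1}^2)=\sigma^4(1+4b_c)+\sigma^2b_c\,E\big[\eps_t^2I(\eps_t^2\le c)\big]$, whereas the coefficient of $u_1^2u_2^2$ in Eq.~(\ref{Eq.3.6}) yields $\sigma^4(1+4b_c+b_c^2)$; these coincide only if $E\big[\eps_t^2I(\eps_t^2\le c)\big]=b_c\sigma^2$, which fails for every $c\in(0,+\infty)$. So the product form cannot be reached along your route, because the ``no residual cross--terms'' claim it rests on is false whenever some innovation appears both multiplicatively and as the argument of an adjacent indicator (i.e.\ whenever $\ell\ge2$, or $\ell=1$ with $p\ge2$, and $b_c\in(0,1)$).

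For comparison, the paper's own proof performs the same three--regime split and then simply writes $\varphi_X^{(\ell)}$ as the product of the marginal CFs of the individual factors, silently treating the middle coefficients $\big(u_j-\theta_{t+j-1}\sum_k\alpha_ku_{k+j}\big)\eps_{t+j-1}$ and the tail variables $\eta_{t-j}=\theta_{t-j}\eps_{t-j}$ as jointly independent; it only records that each pair $(\theta_t,\eps_t)$ is independent and that the $\eta_t$ are uncorrelated, which is not enough. You have therefore correctly located the weak point of the published argument --- the chain of dependencies $\eps_r\to\theta_{r+1}$ linking consecutive factors --- but your proposal stops exactly where a proof would have to begin, and the statement it postpones is not provable. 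To repair the computation one must carry the truncated--Gaussian transforms $E\big[\E^{\I b\eps}I(\eps^2\le c)\big]$ through the recursion instead of replacing them by $b_c\,\E^{-\sigma^2b^2/2}$, which leads to a different (non--product) expression for $\varphi_X^{(\ell)}$ when $M\ge2$.
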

\begin{proof}
	According to the definition of the Split--MA process, as well as the CF (\ref{Eq.3.1}), it is valid that
	$\varphi_X^{(\ell)}(\mathbf{u};\theta)=E\big[\mathcal{L}_X^{(\ell)}(\mathbf{u};\theta)\big]$, where
\begin{eqnarray*}
\mathcal{L}_X^{(\ell)}(\mathbf{u};\theta)&=&
\exp\left\{\I\Bigg[u_M\eps_{t+M-1}+\sum\limits_{j=1}^{M-1}\bigg(u_j-\theta_{t+j-1}\sum_{k=1}^{M-j}\alpha_k u_{k+j}\bigg)\eps_{t+j-1}\right.\\
&&-\left.\sum\limits_{j=1}^{M}\eta_{t-j}\sum_{k=j}^{M}\alpha_k u_{k-j+1}\Bigg]\right\},
\end{eqnarray*}
and, for an arbitrary $t\in\Z$, we denote $\eta_t:=\theta_t \eps_t$. As the random variables $\theta_t$ and $\eps_t$ are independent, it follows that $E(\eta_t)=0$, $\mathrm{Var}(\eta_t)=E(\eta_t^2)=b_c \sigma^2$ and $\mathrm{Cov}(\eta_t,\eta_{t+k}) = 0$, $k\neq0$. Thus, $(\eta_t)$ is a series of uncorrelated random variables, with the CF
	\begin{equation}\label{Eq.3.3}
	\vphi_\eta(u;\theta)=\int_{-\infty}^{+\infty}\E^{iux}\big[b_c F_\eps+(1-b_c)F_0\big](\D x),
	\end{equation} 	
where $F_\eps(x)=P\{\eps_t<x\}$ and $F_0(x)=I\{x>0\}$ are the CDFs of the random variables $\eps_t:\N(0,\sigma^2)$ and $I_0\stackrel{as}=0$, respectively. As appropriate CFs of these random variables are $\vphi_\eps(u)=\E^{-\sigma^2u^2/2}$ and $\vphi_0(u)\equiv1$, a substitution in (\ref{Eq.3.3}) gives
	\begin{equation}\label{Eq.3.4}
	\vphi_\eta(u;\theta)=(1-b_c)\vphi_0(u)+b_c\, \vphi_\eps(u)=1+b_c\left(\E^{-\sigma^2u^2/2}-1\right).
	\end{equation} 	
According to this, it is obvious that   $\overline{\vphi_\eta(u;\theta)}=\vphi_\eta(u;\theta)$, and the CF $\varphi_X^{(\ell)}(\mathbf{u};\theta)$ can be rewritten in the form
\begin{eqnarray*}
	\vphi_X^{(\ell)}(\mathbf{u};\theta) &=&\vphi_\eps(u_M)\times \prod\limits_{j=1}^{M-1}\Bigg[(1-b_c) \vphi_\eps(u_j)+b_c\vphi_\eps\bigg(u_j-\sum_{k=1}^{M-j}\alpha_k u_{k+j}\bigg)\Bigg]\\
	&&\times\prod\limits_{j=1}^{M}\Bigg[\vphi_\eta\bigg(\sum_{k=j}^{M}\alpha_k u_{k-j+1}\bigg)
	\Bigg].
\end{eqnarray*}
The last equality and Eq.(\ref{Eq.3.4}) imply Eq.(\ref{Eq.3.2}), i.e., complete the proof of this theorem.
\end{proof}

\begin{remark}
	According to the previous theorem, the first two orders CFs of $(X_t)$ are
	\begin{eqnarray}
	\label{Eq.3.5}	\varphi_X^{(1)}(u;\theta)&=&\E^{-\sigma^2 u^2/2}\prod_{j=1}^{p}\left[1+b_c\left(\E^{-\alpha_j^2\sigma^2 u^2/2}-1\right)\right],\\[1mm]
	\label{Eq.3.6}		\varphi_X^{(2)}(u_1,u_2;\theta)&=&\E^{-\sigma^2u_2^2/2}\left[(1-b_c)\E^{-\sigma^2u_1^2/2}+b_c\E^{-\sigma^2 (u_1-\alpha_1u_2)^2/2}\right]\\
	\nonumber		&&\times{\prod_{j=1}^{p}\left[1+b_c\left(\E^{-\sigma^2(\alpha_{j}u_1+\alpha_{j+1}u_2)^2/2}-1\right)\right]},
	\end{eqnarray}
respectively, where $\alpha_{p+1}=0$.
\end{remark}

The CFs in Eqs.(\ref{Eq.3.5})--(\ref{Eq.3.6}) can be used to study the stochastic properties of the series  $(X_t)$ and for parameters estimation. Note that the first order CF (3.5) and Levy's convergence theorem immediately imply:
\begin{corollary}
	The CDF of the random variables $X_t$, $t\in\Z$ is
	\[
	F_X(x)=\bigotimes\limits_{j=1}^p \big[b_c F_j +(1-b_c)F_0\big]\otimes F_\eps(x),
	\]
	where $``\otimes"$ denotes the convolution operator and $F_j(x)$, $j=1,\dots,p$, are the CDFs of the random variables with the Gaussian distribution $\N(0,\alpha_j^2\sigma_j^2)$.
\end{corollary}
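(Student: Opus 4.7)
The plan is to read off the claim directly from the first-order characteristic function \eqref{Eq.3.5} by recognising each factor as the CF of a mixture distribution, and then invoking the uniqueness half of L\'evy's theorem to translate the product of CFs into a convolution of laws. No probabilistic estimates or limit arguments are needed; the whole content of the corollary is encoded in the algebraic form of $\varphi_X^{(1)}$.

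First, I would rewrite each factor of the product in \eqref{Eq.3.5} in mixture form. Since
\[
1+b_c\bigl(\E^{-\alpha_j^2\sigma^2 u^2/2}-1\bigr)=(1-b_c)\cdot 1 + b_c\cdot \E^{-\alpha_j^2\sigma^2 u^2/2},
\]
and since $1$ is the CF of the unit mass $F_0$ at the origin while $\E^{-\alpha_j^2\sigma^2 u^2/2}$ is the CF of an $\N(0,\alpha_j^2\sigma^2)$ variable with CDF $F_j$, the $j$-th factor is exactly the CF of the mixture distribution $b_c F_j+(1-b_c)F_0$. Likewise, the prefactor $\E^{-\sigma^2u^2/2}$ is the CF associated with $F_\eps$.

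The second step is to combine these. Since $\varphi_X^{(1)}(u;\theta)$ is a product of $p+1$ characteristic functions, it is the CF of a sum of independent random variables whose individual laws are $b_c F_j+(1-b_c)F_0$ (for $j=1,\dots,p$) and $F_\eps$; equivalently, the CF of the convolution
\[
\bigotimes_{j=1}^{p}\bigl[b_c F_j+(1-b_c)F_0\bigr]\otimes F_\eps.
\]
By L\'evy's uniqueness theorem for characteristic functions, this identifies $F_X$ with the stated convolution, which is the desired identity.

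There is essentially no obstacle: the only point that deserves a line of care is the mixture rewriting, which hinges on noticing that the constant $1$ is a genuine characteristic function (that of $F_0$), so that the convex combination $(1-b_c)\cdot 1+b_c\cdot\E^{-\alpha_j^2\sigma^2 u^2/2}$ can be interpreted probabilistically rather than merely as an algebraic expression. Everything else is the standard ``product of CFs = CF of convolution'' plus uniqueness.
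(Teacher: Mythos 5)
Your proposal is correct and matches the paper's own (very brief) justification: the paper likewise derives the corollary directly from the first-order CF \eqref{Eq.3.5} by reading the product of characteristic functions as a convolution of the corresponding laws and appealing to L\'evy's theorem. Your explicit identification of each factor as the CF of the mixture $b_cF_j+(1-b_c)F_0$ is exactly the intended argument, and your use of the uniqueness statement (rather than the convergence statement the paper names) is the more precise citation.
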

Moreover, the following proposition gives a recurrence relation for moments of the Split--MA process.
\begin{theorem}\label{Thm:2}
	For an arbitrary $n\in \mathbb N$, the Split--MA$(p)$ processs $(X_t)$ has the finite $n$-th moment
	\[
	E\left(X_t^n\right)=\left\{\begin{array}{ll}
	0,& n=2m-1,\\[2mm]
	\ds\sum\limits_{k=1}^{m} {2m-1\choose 2k-1} (2k-1)!!\ \sigma^{2k}\ W_k(b_c)\ E\left(X_t^{2(m-k)}\right),& n=2m,
	\end{array}\right.
	\]
	where $(2k-1)!! = (2k-1)(2k-3)\cdots 2\cdot 1$,
	\[
	W_k\left(b_c\right)=\left\{\begin{array}{ll}
	1+b_c\sum_{j=1}^{p}\alpha_j^2,& k=1,\\[2mm]
	L_k(b_c)\sum_{j=1}^{p} \alpha_j^{2k},& k=2,3,\dots,
	\end{array}\right.
	\]
	and $\{L_k(x)\}_{k\in\Z}$ is a sequence of algebraic polynomials defined by the generating function $$G(x,t)=\log(1-x+x\E^t)=\sum_{k=1}^{+\infty}L_k(x) \frac{t^k}{k!}.$$
\end{theorem}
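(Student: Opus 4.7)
The plan is to read the moments directly off the cumulant generating function built from the first--order CF. Starting from~(\ref{Eq.3.5}) and substituting $u=-\I t$, the moment generating function of $X_t$ becomes
\[
M(t)=E\bigl[\E^{tX_t}\bigr]=\exp\!\left(\tfrac{1}{2}\sigma^2 t^2\right)\prod_{j=1}^{p}\bigl[1-b_c+b_c\E^{\alpha_j^2\sigma^2 t^2/2}\bigr],
\]
which is entire in $t$; this both guarantees $E(X_t^n)<\infty$ for every $n$ and justifies working with power--series expansions about the origin. Taking logarithms and recognizing each factor inside the product as $G\bigl(b_c,\tfrac{1}{2}\alpha_j^2\sigma^2 t^2\bigr)$, the cumulant generating function splits cleanly as
\[
\log M(t)=\tfrac{1}{2}\sigma^2 t^2+\sum_{j=1}^{p}\sum_{k=1}^{\infty}L_k(b_c)\,\frac{(\alpha_j^2\sigma^2 t^2/2)^k}{k!}.
\]

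Next I would read off the cumulants $\kappa_n$ from $\log M(t)=\sum_{n\ge 1}\kappa_n t^n/n!$. Since $\log M$ is an even function of $t$, all odd cumulants vanish. Using the elementary identity $(2k)!/(2^k k!)=(2k-1)!!$ to convert power--series coefficients into factorial normalizations, the comparison yields
\[
\kappa_2=\sigma^2\Bigl(1+b_c\sum_{j=1}^{p}\alpha_j^2\Bigr)=\sigma^2 W_1(b_c),
\]
where the extra ``$1$'' is the contribution of the isolated $\tfrac{1}{2}\sigma^2 t^2$ term, and for $k\ge 2$
\[
\kappa_{2k}=(2k-1)!!\,\sigma^{2k}\,L_k(b_c)\sum_{j=1}^{p}\alpha_j^{2k}=(2k-1)!!\,\sigma^{2k}\,W_k(b_c).
\]

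To pass from cumulants to moments I would invoke the standard recurrence obtained by differentiating $M=\exp(\log M)$, namely
\[
\mu_n=\sum_{k=1}^{n}\binom{n-1}{k-1}\kappa_k\,\mu_{n-k},\qquad \mu_n:=E(X_t^n),\ \mu_0=1.
\]
Because $\kappa_{2k-1}=0$, only even values of $k$ contribute, which immediately gives $\mu_{2m-1}=0$ by induction (alternatively, $X_t\stackrel{d}{=}-X_t$ follows at once from $\varphi_X^{(1)}(u;\theta)$ depending on $u$ only through $u^2$). Restricting to $n=2m$ and inserting the formulas for $\kappa_{2k}$ reproduces exactly the claimed recurrence.

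The only real obstacle is combinatorial bookkeeping: one must absorb the isolated $\tfrac{1}{2}\sigma^2 t^2$ contribution into the $k=1$ branch of $W_k(b_c)$, keep the factorial factors straight when turning $t^{2k}$--coefficients into $\kappa_{2k}/(2k)!$, and verify the identity $(2k)!/(2^k k!)=(2k-1)!!$. Once these constants are aligned, the theorem is the clean statement that the cumulants of $X_t$ are built from the polynomials $L_k$ evaluated at $b_c$ — which is precisely what the generating function $G(x,t)$ was designed to encode.
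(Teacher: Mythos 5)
Your proposal is correct and follows essentially the same route as the paper: the moment--cumulant recurrence $\mu_n=\sum_{k}\binom{n-1}{k-1}\kappa_k\mu_{n-k}$ you invoke is exactly the Leibniz-type identity the paper applies to $\varphi_X^{(1)}=\E^{\Psi}$ with $\Psi=\log\varphi_X^{(1)}$, and the vanishing of odd terms plus the restriction to even indices is handled identically. The only (welcome) difference is that you obtain the even cumulants by recognizing each log-factor as $G\bigl(b_c,\alpha_j^2\sigma^2t^2/2\bigr)$ and expanding the defining series for $L_k$, which makes the coefficient computation explicit where the paper merely asserts $V_k=(-1)^k(2k-1)!!\,\sigma^{2k}W_k(b_c)$ ``by induction''; your passage to the MGF via $u=-\I t$ is harmless here since $X_t$ is a finite sum of (truncated) Gaussians, so both generating functions are entire.
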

\begin{proof}
	Let us denote $\Psi(u;\theta):=\log\vphi_X^{(1)}(u;\theta)=-\sigma^2u^2/2+\sum_{j=1}^{p}\log f_j(u)$, where $f_j(u):=1+b_c\left(\E^{-\alpha_j^2\sigma^2u^2/2}-1\right)$, $j=1,\dots,p$. According to Leibniz's formula, the $n$-th derivative of the CF $\vphi_X^{(1)}(u;\theta)$ can be written as	
	\[
	\frac{\D^n \vphi_X^{(1)}(u;\theta)}{\D u^n}=\sum_{k=1}^{n} {n-1 \choose k-1}\ \frac{\D^{n-k} \vphi_X^{(1)}(u;\theta)}{\D u^{n-k}}\ \frac{\D^k \Psi(u;\theta)}{\D u^k}.
	\]
	Since $\vphi_X^{(1)}(0;\theta)=1$ and $f'_j(0)=0$, $j=1,\dots,p$, we conclude that $\ds\frac{\D }{\D u}\vphi_X^{(1)}(0;\theta)=0$ and  $\ds\frac{\D }{\D u}\Psi(0;\theta)=0$. Furthermore, one can easily prove that
	\[\frac{\D^{2m-1} \vphi_X^{(1)}(0;\theta)}{\D^{2m-1} u}=\frac{\D^{2m-1} \Psi(0;\theta)}{\D^{2m-1} u}=0, \]
	for each $ m\in \mathbb N.$ On the other hand, for the derivatives of even orders we have
	\[
	\frac{\D^{2m} \vphi_X^{(1)}(0;\theta)}{\D u^{2m}}=\sum_{k=1}^{m} {2m-1 \choose 2k-1}\ \frac{\D^{2(m-k)} \vphi_X^{(1)}(0;\theta)}{\D u^{2(m-k)}}\ \frac{\D^{2k} \Psi(0;\theta)}{\D u^{2k}},
	\]
	i.e.,
	\begin{equation}\label{Eq.3.7}
	U_m=\sum_{k=1}^{m}{2m-1 \choose 2k-1} U_{m-k}V_k,
	\end{equation}
	where we set $U_k:={\D^{2k} \vphi_X^{(1)}(0;\theta)}/{\D u^{2k}}$
	and $V_k:={\D^{2k} \Psi(0;\theta)}/{\D u^{2k}}$. Using the induction method, it can be proven that
	\[
	V_k=(-1)^k (2k-1)!!\ \sigma^{2k} W_k(b_c), \quad k \in \mathbb{N}.
	\]
	Then, substituting $V_k$ in (\ref{Eq.3.7}) and using that $E(X_t^n)=i^{-n} \D^n \vphi_X^{(1)}(0;\theta) / \D u^n$, the statement of this theorem follows immediately.
\end{proof}

\begin{remark}
	According to Theorem \ref{Thm:2}, we can simply obtain the kurtosis
	\[
	K_X:= \frac{E\left(X_t^4\right)}{\left[E\left(X_t^2\right)\right]^2} = 3\left[1+\frac{W_2(b_c)}{\left(W_1(b_c)\right)^2}\right]\geq 3.
	\]
	It is obvious that the equality $K_X=3$ holds if and only if
	\begin{equation}\label{Eq.3.8}
	b_c=1\;(c=+\infty) \quad \vee \quad b_c=c=0 \quad \vee \quad \alpha_1=\cdots=\alpha_p=0.
	\end{equation}
	In the first case, the Split--MA$(p)$ model is reduced (almost surely) to the linear MA model, while the other two cases give the Gaussian innovations $(\eps_t)$. Thus, (\ref{Eq.3.8}) represents the necessary and sufficient conditions for the process  $(X_t)$ to have a Gaussian distribution. In general, for the non--trivial values $b_c\in(0,1)$ and under previously assumed condition $\sum_{j=1}^{p}\alpha_j=1$, it will be $K_X>3$. Then, the random variables $X_t$ have a non--Gaussian distribution ``peaked" at $E(X_t)=0$.
	However, the higher order Split--MA processes  approximately have the Gaussian distribution, under condition
	\[
	\frac{W_2(b_c)}{\left(W_1(b_c)\right)^2}\lrt0,\quad p\rt+\infty.
	\]
Typical situation of this kind can be seen in Figure \ref{Fig.2}, where the kurtosis of the  Split--MA$(p)$ processes, as the functions of $b_c\in(0,1)$, and with equidistant coefficients $\alpha_1=\cdots=\alpha_p=2^{-k}$, $p=2^{k}$, $k=0,1,2,3$ are shown. In this case, after some simple computations, we find that
	\[ K_X=3\left[1+\frac{b_c(1-b_c)}{2^k\left(2^k+b_c\right)^2}\right]\lrt 3,\quad k\rt+\infty.
	\]
\end{remark}
\begin{figure}[htbp]
	\begin{center}
		\includegraphics[width=.65\textwidth]{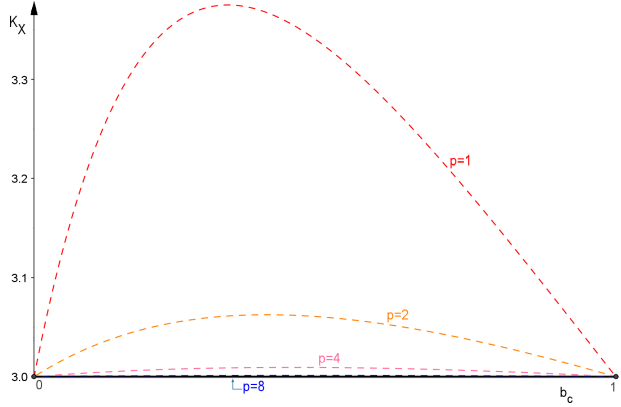}
		\caption{Kurtosis of Split--MA$(p)$ processes with  equidistant coefficients of order $p=2^k$, $k=0, 1,2,3$.}\label{Fig.2}
	\end{center}
\end{figure}

In a more precise manner, the following statement provides
some sufficient conditions for the asymptotic normality of the Split--MA process of infinite order.

\begin{theorem}\label{Thm:3}
	Let $X_t^{(\infty)}:=\eps_t-\sum_{j=1}^{+\infty}\alpha_j \theta_{t-j}\eps_{t-j}$, $t\in \Z$, be the Split--MA$(\infty)$ process, with $\sum_{j=1}^{+\infty}\alpha_j=1$ and $\alpha_j>0$ for the infinite number of $\alpha_j$. If for any $p\in\mathbb{N}$ the condition
	\begin{equation*}
	M_p:=\frac{\max\{\alpha_1^2,\dots,\alpha_p^2\}}{\min\{\alpha_1^2,\dots,\alpha_p^2\}}=\mathrm{O}\left(p^\delta\right), \quad 0\leq\delta<1/2
	\end{equation*}
	holds, then the random variables $X_t^{(\infty)}$ have the Gaussian distribution $\N(0,\Sigma^2)$, where $\Sigma^2=\sigma^2\left(1+b_c\sum_{j=1}^{+\infty}\alpha_j^2\right)$.
\end{theorem}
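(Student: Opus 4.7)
My plan is to combine the explicit first-order CF formula $(\ref{Eq.3.5})$ with L\'evy's continuity theorem. I would first introduce the truncated process $X_t^{(p)} := \eps_t - \sum_{j=1}^{p}\alpha_j\theta_{t-j}\eps_{t-j}$ and note, using the uncorrelatedness of the $\theta_{t-j}\eps_{t-j}$ established in the proof of Theorem \ref{Thm:1}, that
\[
E\!\left[\bigl(X_t^{(\infty)}-X_t^{(p)}\bigr)^2\right] = b_c\sigma^2\sum_{j>p}\alpha_j^2 \to 0,
\]
since $\sum_{j=1}^\infty\alpha_j^2<\infty$ (a consequence of $\alpha_j\to 0$ together with $\sum_j\alpha_j=1$). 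Hence $X_t^{(p)}\to X_t^{(\infty)}$ in $L^2$ and in distribution, so it suffices to show the pointwise convergence $\varphi_{X^{(p)}}(u)\to\exp(-\Sigma^2 u^2/2)$ for every $u\in\R$.

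Next, I would take the logarithm of $(\ref{Eq.3.5})$ and Taylor-expand each summand: combining $\E^{-y}-1 = -y + O(y^2)$, with $y = \alpha_j^2\sigma^2 u^2/2$, and $\log(1+z) = z + O(z^2)$ gives, at fixed $u$,
\[
\log\!\left[1 + b_c\!\left(\E^{-\alpha_j^2\sigma^2 u^2/2}-1\right)\right] = -\frac{b_c\sigma^2 u^2}{2}\,\alpha_j^2 + R_j(u),
\]
with $|R_j(u)|\le C(u)\,\alpha_j^4$ for a constant $C(u)$ depending only on $b_c$, $\sigma^2$, and $u$. Summing over $j$ then yields $\log\varphi_{X^{(p)}}(u) = -\tfrac{\sigma^2 u^2}{2}\bigl(1 + b_c\sum_{j=1}^p\alpha_j^2\bigr) + O\bigl(\sum_{j=1}^p\alpha_j^4\bigr)$.

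The hard step is proving that the remainder $\sum_{j=1}^p\alpha_j^4$ is negligible, which is precisely where the hypothesis $M_p = \mathrm{O}(p^\delta)$ enters. Using the crude bound $\sum_{j=1}^p\alpha_j^4 \le \max_{j\le p}\alpha_j^2\cdot\sum_{j=1}^p\alpha_j^2$, the task reduces to estimating the maximum. From $\max_{j\le p}\alpha_j^2 \le M_p\cdot\min_{j\le p}\alpha_j^2 \le M_p\cdot p^{-1}\sum_{j=1}^p\alpha_j^2 = \mathrm{O}(p^{\delta-1})$, together with the boundedness of $\sum\alpha_j^2$, one obtains $\sum_{j=1}^p\alpha_j^4 \to 0$ since $\delta-1 < -1/2 < 0$. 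Consequently $\log\varphi_{X^{(p)}}(u)\to -\Sigma^2 u^2/2$, and L\'evy's continuity theorem delivers $X_t^{(\infty)}\sim\N(0,\Sigma^2)$.
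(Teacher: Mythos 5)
Your proof is correct, and it takes a genuinely different route from the paper's. The paper works with the partial sums $\sum_{j=1}^{p}\alpha_j\eta_{t-j}$, where $\eta_t=\theta_t\eps_t$, normalizes by $s_p^2=b_c\sigma^2\sum_{j\le p}\alpha_j^2$, and verifies a fourth--moment Lyapunov condition via the bound $s_p^{-4}\sum_{j\le p}E(\alpha_j\eta_{t-j})^4\le \tfrac{3}{b_c}\bigl(1+(p-1)/M_p^2\bigr)^{-1}=\mathrm{O}(p^{2\delta-1})$, concluding $s_p^{-1}\sum_{j\le p}\alpha_j\eta_{t-j}\stackrel{d}{\longrightarrow}\N(0,1)$ before assembling the statement. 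You instead read the asymptotics off the explicit product form of the first--order CF in Eq.~(\ref{Eq.3.5}) after an $L^2$ truncation. Both arguments extract the same consequence of the hypothesis --- the minimum is dominated by the average, so $M_p=\mathrm{O}(p^\delta)$ forces $\max_{j\le p}\alpha_j^2=\mathrm{O}(p^{\delta-1})$ and hence $\sum_{j\le p}\alpha_j^4\to0$ --- but your packaging buys two things. First, your estimate $\sum_{j\le p}\alpha_j^4\le (M_p/p)\bigl(\sum_{j\le p}\alpha_j^2\bigr)^2=\mathrm{O}(p^{\delta-1})$ is sharper than the paper's $\mathrm{O}(p^{2\delta-1})$, so your argument goes through for any $\delta<1$ rather than only $\delta<1/2$. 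Second, and more substantively, the summands $\alpha_j\eta_{t-j}$ are uncorrelated but \emph{not} independent (consecutive terms share an innovation through the indicator: $E[\eta_s^2\eta_{s+1}^2]=b_c\sigma^2E[\eps_s^2 I(\eps_s^2\le c)]\ne b_c^2\sigma^4$), so the classical Lyapunov CLT for independent summands does not apply verbatim and the paper's appeal to it needs extra justification; the factorization of the one--dimensional CF into a product over $j$, which follows from Theorem~\ref{Thm:1} and which you use directly, is precisely the ``as if independent'' property required, and it also makes the identification of the limit law $\N(0,\Sigma^2)$ immediate where the paper ends with ``it is easy to obtain the statement.'' One caveat applying equally to both proofs: since $\max_{j\le p}\alpha_j^2\ge\alpha_1^2$ is nondecreasing in $p$, the derived bound $\max_{j\le p}\alpha_j^2=\mathrm{O}(p^{\delta-1})\to0$ shows the hypothesis cannot hold for a fixed infinite sequence of strictly positive coefficients, so the result is really meaningful only in the triangular--array reading $\alpha_j=\alpha_j^{(p)}$ (as in the paper's equidistant example); this is a defect of the theorem statement, not of your argument.
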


\begin{proof}
	Similarly to the proof of Theorem \ref{Thm:1}, denote $\eta_t:=\theta_t\eps_t$, $t\in\Z$. Hence, the equalities $E(\alpha_j\eta_{t-j})=0$ and $E(\alpha_j\eta_{t-j})^2=b_c \alpha_j^2  \sigma^2$ hold. According to
	\[
	\sum\limits_{j=1}^{+\infty}\alpha_j^2\leq \Bigg(\sum\limits_{j=1}^{+\infty}\alpha_j\Bigg)^2=1,
	\]
	it follows $\sum_{j=1}^{+\infty}\mathrm{Var}(\alpha_j\eta_{t-j})\leq b_c\sigma^2.$
	Therefore, the sum $\sum_{j=1}^{+\infty}\alpha_j\eta_{t-j}$ converges almost surely, i.e., the process $\left(X_t^{(\infty)}\right)$ is well defined.
	
	Further on, for an arbitrary $p\in \mathbb{N}$, denote $X_t^{(p)}:=\sum_{j=1}^{p}\alpha_j\eta_{t-j}$, $t\in\Z$, and $s_p^2:=\mathrm{Var}\left(X_t^{(p)}\right)=b_c\sigma^2\sum_{j=1}^{p}\alpha_j^2$. We then have
	\begin{eqnarray*}
		0&\leq& \frac{1}{s_p^4}\sum_{j=1}^{p}E(\alpha_j\eta_{t-j})^4=\frac{3}{b_c}
		\frac{\sum_{j=1}^{p}\alpha_j^4}{\left(\sum_{j=1}^{p}\alpha_j^2\right)^2}=
		\frac{3}{b_c} \left(1+ \frac{\sum_{j=1}^{p}\sum_{k\neq j}\alpha_j^2\alpha_k^2}{\sum_{j=1}^{p}\alpha_j^4}\right)^{-1}\\	
		&\leq& \frac{3}{b_c} \left(1+\frac{p-1}{M_p^2}\right)^{-1}=\mathrm{O}\left(p^{2\delta-1}\right)\lrt0, \quad p\rt+\infty,	
	\end{eqnarray*}
	i.e. Lyapunov's condition holds. Thus, $s_p^{-1} X_t^{(p)}\stackrel{d}\lrt\N(0,1)$, $p\rt+\infty$, and according to this, it is easy to obtain the statement of this theorem.
\end{proof}
The previous theorem shows that probability distributions of the large--order Split--MA processes, with ``small variation" of coefficients $\alpha_j$, can be approximated with a Gaussian distribution. The CF of the standard Gaussian distribution $\N(0,1)$ is shown in Figure \ref{Fig.3}, and compared to the CFs of Split--MA$(p)$ processes with the equidistant coefficients $\alpha_1=\cdots=\alpha_p=2^{-k}$, where $p=2^k$, $k=1,2\dots$ and $c=\sigma=1$. In this case, it is obvious that $\delta=0$, and therefore, a normal approximation of distributions of the Split--MA series of larger order $p\in\mathbb{N}$ is valid. On the other hand, it is clear that the Split--MA models with a ``small" order $p$ have a pronounced ``peaked" distribution, that significantly differs from the Gaussian one. Therefore, in addition to the general model, we will further investigate, in more details, an estimation procedure of the simplest Split--MA process, when $p=\alpha_1=1$.
\begin{figure}[htbp]
	\begin{center}
		\includegraphics[width=.65\textwidth]{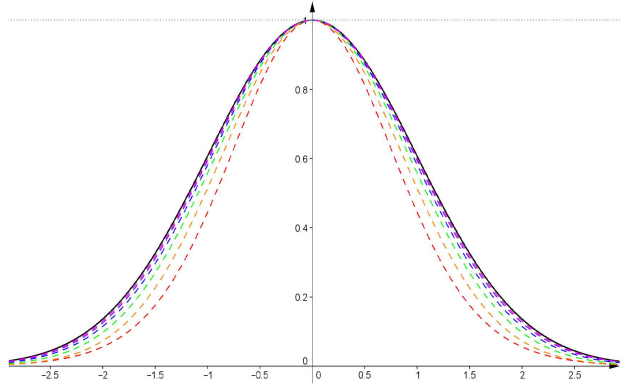}
		\caption{The CF of standard Gaussian distribution (solid line), compared to the CFs of Split--MA$(p)$ processes with  equidistant coefficients of order $p=2^k$ (dashed lines).}\label{Fig.3}
	\end{center}
\end{figure}

\section{Estimations of parameters by the ECF method}
\label{sec:4}

Parameters estimation procedure of the standard STOPBREAK model, introduced by Engle and Smith \citet{STOPBREAK}, was mainly based on the quasi--maximum likelihood (QML) method. In  case of the Split--BREAK model, it can be proven that the likelihood function is unbounded at the origin, and it disables the usage of the QML, as well as some closely related methods, based on the maximum likelihood approach. For these reasons, \citet{Split-BREAK,GSB2} have used some distributional independent  estimation methods, that were based on two typical non--parametric procedures: methods of moments and Gauss--Newton's regression for non-linear functions. However,  the main problem in realization of these procedures is non-observability of the Split--MA$(p)$ process $X_t:=Y_t-\sum_{j=1}^p \alpha_j Y_{t-j}$, for $p>1$.

Here we describe the new parameters estimation procedure for this process, based on \textit{the Empirical Characteristic Function (ECF) method}. In the time series analysis, this method was described for the first time in  \citet{Feuerverger}, as well as in \citet{Knight&Satchell1,Knight&Satchell2}. After that, several authors (cf. \citealp{Singleton,Knight et al., Knight&Yu}) described implementation of the ECF method in econometric analysis and finance. On the other hand, several other new theoretical extensions of the CF--based estimators can be found, for instance in \citet{Balak.} and \citet{Kotchoni1,Kotchoni2}. Here we apply a similar procedure as in parameters estimation of the so--called Split--SV model, described in \citet{GVM3} and \citet{Split-SV}.

The main aim of the ECF method is to minimize ``the distance" between the theoretical CF and the appropriate ECF of some stochastic model. In case of the Split--MA process we denote as $\X_T:=\{X_1,\dots,X_T\}$ a realization of the series $(X_t)$ of length $T\in\mathbb{N}$. Then the appropriate $\ell$-dimensional ECF of the random sample $\X_T$ is
\[
\widetilde{\varphi}_T^{(\ell)}(\mathbf{u}) :=\frac{1}{T-\ell+1}\sum_{t=1}^{T-\ell+1} \exp\left(\I\mathbf{u}'\X_{t}^{(\ell)} \right),
\]
and the objective function is
\begin{equation}\label{Eq.4.1}
S_T^{(\ell)}(\theta):=\idotsint\limits_{\R^\ell} g(\mathbf{u})\abs{\varphi_X^{(\ell)}(\mathbf{u};\theta)-\widetilde{\varphi}_T^{(\ell)}(\mathbf{u})}^2 \D \mathbf{u},
\end{equation}
where $\vphi_X^{(\ell)}(\mathbf{u};\theta)$ is the CF of order $\ell$, defined by Eq. (\ref{Eq.3.1}), $\D \mathbf{u}:= \D u_1\cdots \D u_\ell$ and $g:\mathbb R^\ell\rt\R^+$ is a some weight function. The estimates based on the ECF method are thus obtained by a minimization of the objective function (\ref{Eq.4.1}) with respect to the parameter $\theta=(\alpha_1,\dots,\alpha_p,b_c,\sigma^2)'$. More precisely, they represent the solutions of the minimization equation
\begin{equation}\label{Eq.4.2}
\hat{\theta}_T^{(\ell)}=\mbox{arg}\min_{\theta\in\Theta}S_T^{(\ell)}(\theta),
\end{equation}
where $\Theta=[0,1]^p\times (0,1)\times(0,+\infty)$ is the parameter space of the non--trivial, stationary and invertibile Split--MA$(p)$ process. One of the important problems here is the choice of block size $\ell\in\mathbb N$. We refer to \citet{Knight&Yu}, where the optimal values of $\ell$ were discussed in order to achieve asymptotic efficiency of the ECF estimators for some linear Gaussian time series. Similarly to them, we investigate the strong consistency and asymptotic normality (AN) of the ECF estimates of Split--MA model's  parameters, under certain necessary conditions.

\begin{theorem}\label{Thm:4}
	Let $\theta_0$ be the true value of the parameter $\theta$, and for an arbitrary $T=1,2,\ldots$, let $\hat{\theta}_T^{(\ell)}$ be solutions of the equation $(\ref{Eq.4.2})$. In addition, let us  suppose that the following regularity conditions are fulfilled:
	\begin{itemize}
		\item [\rm(i)]
		There exists the set $\Theta'=[0,1]^p\times(0,1)\times(0,M_{\sigma^2})\subset \Theta$, where $M_{\sigma^2}$ is chosen sufficiently large so that $\theta_0, \hat{\theta}_T^{(\ell)}\in\Theta'$ for all $T \geq T_0>0$;
		\vskip1mm
	\item [\rm(ii)]
		$\dfrac{\partial^2
			S_T^{(\ell)}(\theta_0)}{\partial \theta\ \partial
			\theta'}$ is a regular matrix;
		\vskip1mm
	\item [\rm(iii)]
		$\dfrac{\partial \varphi_X^{(\ell)}(\mathbf{u};\theta_0)}{\partial \theta}\ \dfrac{\partial \varphi_X^{(\ell)}(\mathbf{u};\theta_0)}{\partial \theta'}$ is a non-zero matrix, uniformly bounded by the strictly positive, $g$-integrable function $h:\R^\ell\to \R^+$.
	\end{itemize}
	Then,  $\hat{\theta}_T^{(\ell)}$ is
	strictly consistent and asymptotically normal estimator for $\theta$, for any $\ell\geq p+1$.
\end{theorem}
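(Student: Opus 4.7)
The plan is to follow the classical M-estimation strategy for ECF estimators, in the block-based form of \citet{Knight&Yu} and \citet{Feuerverger}, adapted to our non-linear, non-Gaussian setting. Two pillars must be established: (a) strong consistency via uniform convergence of the objective function together with a global identifiability argument (this is where $\ell \geq p+1$ enters), then (b) asymptotic normality via a second-order Taylor expansion of $S_T^{(\ell)}$ around $\theta_0$ combined with a central limit theorem for the score.

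For consistency, I would first note that since $(X_t)$ is a fixed measurable function of finitely many consecutive i.i.d.\ innovations $(\eps_{t-j})$, the block process $\bigl(\X_t^{(\ell)}\bigr)$ is strictly stationary and finitely dependent, hence ergodic. The pointwise ergodic theorem gives
\[
\widetilde{\varphi}_T^{(\ell)}(\mathbf{u}) \xrightarrow{a.s.} \varphi_X^{(\ell)}(\mathbf{u};\theta_0),\qquad T\to\infty,
\]
and since $|\E^{\I \mathbf{u}'\X_t^{(\ell)}}|\leq 1$, a standard separability argument upgrades this to almost sure uniform convergence on compact subsets of $\R^\ell$. Using integrability of $g$ and boundedness of both CFs, dominated convergence then yields a.s.\ uniform convergence of $S_T^{(\ell)}(\theta)$ to $S^{(\ell)}(\theta):=\int g(\mathbf{u})\bigl|\varphi_X^{(\ell)}(\mathbf{u};\theta)-\varphi_X^{(\ell)}(\mathbf{u};\theta_0)\bigr|^2 \D\mathbf{u}$ on the compact set $\Theta'$. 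A standard argmin-continuity lemma then yields $\hat\theta_T^{(\ell)}\to\theta_0$ a.s.\ once $S^{(\ell)}(\theta)=0 \iff \theta=\theta_0$ is established. For the latter, choosing $\ell\geq p+1$ makes the autocovariances $\gamma_X(0),\gamma_X(1),\dots,\gamma_X(p)$ in (\ref{Cov X}) visible inside $\varphi_X^{(\ell)}$, producing $p+1$ moment equations which, combined with positivity of the $\alpha_j$, $b_c$, $\sigma^2$ and the normalization $\sum_{j=1}^p\alpha_j=1$, pin down $\theta$; the higher-order exponential-mixture structure of (\ref{Eq.3.2}) then separates $b_c$ from $\sigma^2$, giving global identifiability.

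For asymptotic normality, a Taylor expansion of $\partial S_T^{(\ell)}/\partial\theta$ around $\theta_0$, together with (ii) and consistency, gives
\[
\sqrt{T}\bigl(\hat\theta_T^{(\ell)}-\theta_0\bigr) = -\Bigg[\frac{\partial^2 S_T^{(\ell)}(\theta_T^*)}{\partial\theta\,\partial\theta'}\Bigg]^{-1}\sqrt{T}\,\frac{\partial S_T^{(\ell)}(\theta_0)}{\partial\theta},
\]
where $\theta_T^*$ lies between $\hat\theta_T^{(\ell)}$ and $\theta_0$. Condition (iii) and the previous uniform convergence show that the Hessian converges a.s.\ to the regular matrix $2\int g(\mathbf{u})\,\mathrm{Re}\!\bigl[\partial_\theta\varphi_X^{(\ell)}(\mathbf{u};\theta_0)\,\overline{\partial_{\theta'}\varphi_X^{(\ell)}(\mathbf{u};\theta_0)}\bigr]\D\mathbf{u}$. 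Direct differentiation shows the score can be written as
\[
\sqrt{T}\,\frac{\partial S_T^{(\ell)}(\theta_0)}{\partial\theta} = -\frac{2\sqrt{T}}{T-\ell+1}\sum_{t=1}^{T-\ell+1}\xi_t,
\]
where $\xi_t$ are centered, bounded, finitely-dependent random vectors equal to $\int g(\mathbf{u})\,\mathrm{Re}\!\bigl[\partial_\theta\varphi_X^{(\ell)}(\mathbf{u};\theta_0)\,\overline{\E^{\I\mathbf{u}'\X_t^{(\ell)}}-\varphi_X^{(\ell)}(\mathbf{u};\theta_0)}\bigr]\D\mathbf{u}$. The Hoeffding--Robbins CLT for stationary $m$-dependent sequences then gives $\sqrt{T}\,\partial S_T^{(\ell)}(\theta_0)/\partial\theta \Rightarrow \N(0,\Omega)$, and Slutsky's theorem completes the proof.

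The main obstacle I anticipate is the global identifiability step. Local identification at $\theta_0$ follows from (ii)--(iii), which essentially assert non-degeneracy of the information-type matrix, but ruling out a distinct parameter vector giving exactly the same CF on the whole of $\R^\ell$ is more delicate. The finite exponential mixture in (\ref{Eq.3.2}) should be handled by a uniqueness-of-exponential-sums argument together with the covariance recovery above, but care is required near the boundary of $\Theta$ where the model degenerates as in (\ref{Eq.3.8})---precisely the reason for working inside the interior set $\Theta'$ furnished by assumption (i). A secondary technical point is verifying that differentiation and integration can be interchanged in $\partial S_T^{(\ell)}/\partial\theta$ and in the limiting Hessian: this is where the $g$-integrable envelope $h$ in (iii), together with $|\partial_\theta\varphi_X^{(\ell)}|$ being uniformly bounded on $\Theta'$ (which follows from inspection of (\ref{Eq.3.2})), will be used via dominated convergence.
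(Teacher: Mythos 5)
Your proposal is correct and follows essentially the same route as the paper: uniform almost-sure convergence of $S_T^{(\ell)}$ to a population objective with a unique zero at $\theta_0$ (identification requiring $\ell\geq p+1$) for strong consistency, then a Taylor expansion of the score, convergence of the Hessian to $2\mathbf{V}$ under (ii)--(iii), and a CLT for the centred score terms followed by Slutsky for asymptotic normality. The only cosmetic differences are that you invoke the Hoeffding--Robbins $m$-dependent CLT where the paper uses a stationary-process CLT verified via $\sum_k\gamma_X(k)=\sigma^2(1-b_c)\in(0,\infty)$, and you sketch the identifiability step directly where the paper defers it to the standard MA argument in \citet{Yu}.
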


\begin{proof}
In order to prove the consistency of $\hat{\theta}_T^{(\ell)}$, we check the sufficient consistency conditions of extremum estimators (see, for instance \citealp{Newey}). Note that, under assumption (i), the set $\overline{\Theta'}=[0,1]^p\times[0,1]\times[0,M_{\sigma^2}]$ is a compact, and $\theta_0\in {\rm int}(\overline{\Theta'})$.
As the series $(X_t)$ is ergodic and
$\widetilde{\varphi}_T(\mathbf{u})$ is an unbiased estimator of $\varphi_X^{(\ell)}(\mathbf{u};\theta_0)$, the strong law of large numbers gives $\widetilde{\varphi}_T(\mathbf{u})\stackrel{\rm as}\lrt \varphi_X^{(\ell)}(\mathbf{u};\theta_0)$, and hence $\sup_{\theta\in\overline{\Theta'}} \abs{\widetilde{\varphi}_T(\mathbf{u})-\varphi_X^{(\ell)}(\mathbf{u};\theta_0)}\stackrel{\rm as}\lrt 0$, when
$T\rt +\infty$.

Further, if we define the function
\[
S_0^{(\ell)}(\theta):=\idotsint \limits_{\R^\ell}g(\mathbf{u})
\abs{\varphi_X^{(\ell)}(\mathbf{u};\theta_0)-\varphi_X^{(\ell)}(\mathbf{u};\theta)}^2 \D\mathbf{u}\geq0,
\]
then, according to Eq.(\ref{Eq.3.2}), it is obvious that $S_0^{(\ell)}(\theta)$ is continuous on $\Theta'$. Using the same deliberation as in case of the standard MA processes (see, for instance \citealp{Yu}), it is easy to see that, under assumption $\ell\geq p+1$, the equality $S_0^{(\ell)}(\theta)=0$ holds only if $\theta=\theta_0$. Thus, $S_0^{(\ell)}(\theta)$ has an unique minima at the true parameter value, and in the same way as in \citet{Knight&Yu}, it can be proven that
\[
\abs{S_T^{(\ell)}(\theta)-S_0^{(\ell)}(\theta)}\leq 4 \idotsint \limits_{\R^\ell}g(\mathbf{u})\abs{\widetilde{\varphi}_T^{(\ell)}(\mathbf{u})-\varphi_X^{(\ell)}(\mathbf{u};\theta_0)}	\D\mathbf{u}.
\]
Hence, $\sup_{\theta\in\overline{\Theta'}} \abs{S_T^{(\ell)}(\theta)-S_0^{(\ell)}(\theta_0)}\stackrel{\rm as}\lrt 0$,
$T\rt +\infty$, i.e., $S_T^{(\ell)}(\theta)$ uniformly converges almost surely to $S_0^{(\ell)}(\theta)$.
Consequently, according to Theorem 2.1 in \citet{Newey},  $\hat{\theta}_T^{(\ell)}-\theta_0\stackrel{\rm as}\lrt 0$, when
$T\rt +\infty,$ i.e.,
the estimator $\hat{\theta}_T^{(\ell)}$ is strictly consistent.

In order to show the AN, note that the function $S_T^{(\ell)}(\theta)$ has continuous partial derivatives up to the second order, for any component of the vector $\theta$. Thus, the Taylor expansion of $\partial S_T^{(\ell)}(\theta)/\partial \theta$ at $\theta=\theta_0$ gives
	\begin{equation*}
	\frac{\partial S_T^{(\ell)}(\theta)}{\partial \theta}= \frac{\partial S_T^{(\ell)}(\theta_0)}{\partial \theta}+
	\frac{\partial^2 S_T^{(\ell)}(\theta_0)}{\partial \theta\ \partial
		\theta'}\cdot(\theta-\theta_0)+{o}(\theta-\theta_0).
	\end{equation*}
	For a sufficiently large $T$, we substitute $\theta$ by $\hat{\theta}_T^{(\ell)}$, under assumption {\rm(ii)} and the fact that
	$\partial S_T^{(\ell)}(\hat{\theta}_T^{(\ell)}) /\partial \theta=0$. Then we have
	\begin{equation*}
	\hat{\theta}_T^{(\ell)}-\theta_0=-\left[\frac{\partial^2
		S_T^{(\ell)}(\theta_0)}{\partial \theta\ \partial
		\theta'}\right]^{-1}\frac{\partial  S_T^{(\ell)}(\theta_0)}{\partial
		\theta}+{o}\big(\hat{\theta}_T^{(\ell)}-\theta_0\big).
	\end{equation*}
According to the mentioned properties of the function $S_T^{(\ell)}(\theta)$, it can be differentiated under the integral sign, e.g.,
	\begin{equation}\label{PartDeriv1}
	\frac{\partial S_T^{(\ell)}(\theta)}{\partial \theta} =
	2 \idotsint\limits_{\R^\ell}g(\mathbf{u})\left[ \varphi_X^{(\ell)}(\mathbf{u};\theta)-\widetilde{\varphi}_T(\mathbf{u})\right] \frac{\partial \varphi_X^{(p)}(\mathbf{u};\theta)}{\partial \theta}\mathbf{du},
	\end{equation}
	and
	\begin{eqnarray}\label{PartDeriv2}
\nonumber\frac{\partial^2 S_T^{(\ell)}(\theta)}{\partial \theta\ \partial
		\theta'}&=&
	2\idotsint\limits_{\R^\ell}g(\mathbf{u})\left\{ \frac{\partial \varphi_X^{(\ell)}(\mathbf{u};\theta)}{\partial \theta}
	\frac{\partial \varphi_X^{(\ell)}(\mathbf{u};\theta)}{\partial \theta'}\right.\\
	&& \left.+ \left[\varphi_X^{(\ell)}(\mathbf{u};\theta)- \widetilde{\varphi}_T(\mathbf{u})\right]\frac{\partial^2 \varphi_X^{(p)}(\mathbf{u};\theta)}{\partial \theta\ \partial \theta'} \right\}	\mathbf{du}.
	\end{eqnarray}
 As $E\big[\widetilde{\varphi}_T^{(p)}(\mathbf{u})\big]=\varphi_Y^{(p)}(\mathbf{u};\theta_0)$, Eqs.(\ref{PartDeriv1})--(\ref{PartDeriv2}) give
	\begin{equation}\label{MathExpect}
	E\left[ \frac{\partial S_T^{(\ell)}(\theta_0)}{\partial \theta}\right]=0,
	\quad
	E\left[ \frac{\partial^2 S_T^{(\ell)}(\theta_0)}{\partial \theta\ \partial \theta'}\right]=2\mathbf{V},
	\end{equation}
	where
$$	\mathbf{V}=\idotsint_{\R^\ell}g(\mathbf{u})\frac{\partial \varphi_X^{(\ell)}(\mathbf{u};\theta_0)}{\partial \theta}
	\frac{\partial \varphi_X^{(\ell)}(\mathbf{u};\theta_0)}{\partial \theta'}\mathbf{du}.
$$
Under assumption {\rm(iii)}, the inequalities
$ 0 < \norm{\mathbf{V}} \leq\idotsint_{\R^\ell}g(\mathbf{u}) h(\mathbf{u})\mathbf{du}<+\infty $
	hold, and consequently
	\begin{equation}\label{A.S.Converg.}
	\left(\frac{\partial S_T^{(\ell)}(\theta_0)}{\partial \theta_0},
	\frac{\partial^2 S_T^{(\ell)}(\theta_0)}{\partial \theta\ \partial \theta'}\right)\stackrel{\rm as}\lrt (0,2\mathbf{V}),
	\quad T\rt +\infty.
	\end{equation}
		
Now, we write the gradient of $S_T^{(\ell)}(\theta)$ as
	\begin{equation*}
	\frac{\partial S_T^{(\ell)}(\theta)}{\partial \theta}
	=\frac2{T-\ell+1}\sum_{t=1}^{T-1} \mathbf{K}_t(\theta),
	\end{equation*}
	where
	$
	\mathbf{K}_t(\theta)=\idotsint_{\R^\ell}g(\mathbf{u})\left[\varphi_X^{(\ell)}(\mathbf{u};\theta)-\widetilde{\vphi}_T\left(\mathbf{u}\right)\right]
	\frac{\partial \varphi_X^{(\ell)}(\mathbf{u};\theta)}{\partial \theta}\,\mathbf{du}.
	$
	It can then be shown (see, for instance \citealp{Yu}) that the finite non--zero limit
	\begin{eqnarray*}
\mathbf{W}^2&:=& \lim\limits_{T\rt\infty} \frac1{(T-\ell+1)^2}\, \mathrm{Var}\Bigg[\sum_{t=1}^{T-\ell+1}\mathbf{K}_t(\theta_0)\Bigg]\\
&=&\lim\limits_{T\rt\infty} \frac1{(T-\ell+1)^2}\, \sum_{t=1}^{T-\ell+1}\sum_{s=1}^{T-\ell+1}\mbox{Cov}\Big[\mathbf{K}_t(\theta_0)\mathbf{K}_s(\theta_0)\Big]
\end{eqnarray*}
	exists if the series $\gamma_X(k):=\mbox{Cov}(X_t,X_{t+k})$,
	$k=0,\pm 1,\pm2,\dots$, has the finite and non--zero sum. In case of the Split-MA$(p)$ model, according to Eq.(\ref{Cov X}), we have
	\begin{eqnarray*}
		C&:=&\sum_{k=-\infty}^{+\infty}\gamma_X(k)=\sigma^2
		\Bigg(1+b_c\sum_{j=1}^{p}\alpha_j^2\Bigg)+2\sigma^2b_c\Bigg(\sum_{k=1}^{p-1}\sum_{j=1}^{p-k}\alpha_j\alpha_{j+k}-\sum_{k=1}^{p}\alpha_k \Bigg)\\
		&=& \sigma^2\Bigg[1+b_c\bigg(\sum_{j=1}^{p}\alpha_j\bigg)^2-2b_c
		\bigg(\sum_{j=1}^{p}\alpha_j\bigg)\Bigg]=\sigma^2(1-b_c).
	\end{eqnarray*}
According to this, it is obvious that for the non--trivial values $b_c\in(0,1)$, the inequalities $0<C<+\infty$ hold for each $\theta\in\Theta'$. By applying the central limit theorem for stationary processes, we obtain
	\[
	\sqrt{T-\ell+1}\;\frac{\partial S_T^{(\ell)}(\theta_0)}{\partial \theta}\stackrel{d}
	{\longrightarrow}\mathcal{N}(0,4\mathbf{W}^2),\quad T\rt+\infty,
	\]
	and this convergence and Eqs.(\ref{MathExpect})--(\ref{A.S.Converg.}) imply
	\[
	\sqrt{T-\ell+1} \left(\hat{\theta}_T^{(\ell)}-\theta_0\right)\stackrel{d}
	{\longrightarrow}\mathcal{N}(0,\mathbf{V}^{-1}\mathbf{W}^2\mathbf{V}^{-1}),\quad T\rt+\infty.
	\]
This completes the proof of the Theorem.	
\end{proof}

\begin{remark}
The assumption (i) of the previous theorem ensures the compactness of the parameters set, and it is necessary to provide the strong consistency of the ECF estimates of Split--MA model's parameters. On the other hand, assumptions (ii) and (iii) enable that regular random matrices $\partial^2 S_T^{(\ell)}(\theta_0)/\partial \theta\ \partial \theta'$ converge almost surely to the non-zero finite matrix $2\mathbf{V}$, when $T\rt\infty$. In this way, these two assumptions are necessary for AN of the ECF estimates. Note that all these assumptions are weaker than the general regularity conditions of ECF estimators (see, for instance \citealp{Knight&Yu}). Hence, they can be satisfied in most applications of the GSB process.
\end{remark}

\begin{remark}
According to the proof of the previous theorem, continuous function $S_0^{(\ell)}(\theta)\geq 0$, providing that $\ell\geq p+1$, attains the unique minimum at $\theta=\theta_0$. This is the so-called identification condition, which holds if the order   of the CF of Split--MA model is at least equal to the number of its parameters.
As we mentioned before, we consider the  simplest model of the Split--MA process, when $p=\alpha_1=1$ and $\theta=(b_c,\sigma^2)'$.
Hence,  we use the CF of order $\ell=p+1=2$, i.e., the ECF procedure based on two--dimensional random vector $\X_t^{(2)}:=(X_t,X_{t+1})'$.
Note that, in this case, the objective function
$S_T^{(2)}$ represents a double integral with respect to the weight function $g:\mathbb R^2\rightarrow\mathbb R^+$. Therefore, as we will see later on, it can be numerically approximated by using some cubature formulas. In addition, we give an explicit expression of the two--dimensional CF of $(X_t)$ in Eq.(\ref{Eq.3.6}).
As it can be seen, this function takes only real values, and the following real--valued function
\[
\widetilde{\varphi}_T(\mathbf{u}) :=\mathrm{Re}\ \widetilde{\varphi}_T^{(2)}(\mathbf{u})=\frac{1}{T-1}\sum_{t=1}^{T-1} \cos\left(u_1X_{t}+u_2X_{t+1} \right)
\]
may be used as its empirical estimate.
Figure \ref{Fig.4} illustrates the graphs of two--dimensional CF $\vphi_X^{(2)}(\mathbf{u})$, as well as the appropriate ECF $\widetilde{\varphi}_T(\mathbf{u})$, when $T=1\,500$ and $p=\alpha_1=c=\sigma^2=1$.
\end{remark}

\begin{figure}[hbtp]
	\includegraphics[width=.95\textwidth]{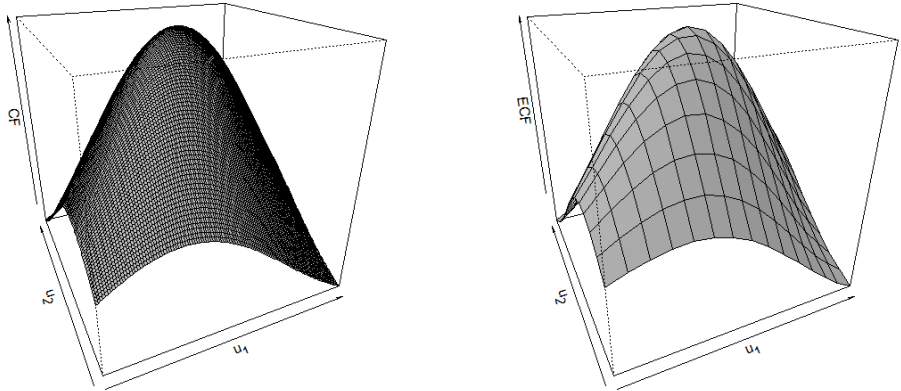}
	\caption{Graphs of the two--dimensional CF (panel left) and the appropriate ECF (panel right) of the series $\X_t^{(2)}=(X_t,X_{t+1})$.}\label{Fig.4}
\end{figure}

\section{Numerical simulations of the ECF estimates}
\label{sec:5}

\subsection{Simulating the Split--MA process}

In the first part of this Section, we present the pseudo algorithm intended to simulate the Split--MA(1) model, as well as to compute its initial parameters' estimates. It is based on 1\,000 independent Monte Carlo replications of our model, i.e., the 1\,000 independent realizations of the series
\[
X_t=\eps_t-\theta_{t-1}\, \eps_{t-1},\quad t=1,\dots,T.
\]
where $\theta_t=I(\eps_{t-1}^2\leq c)$ and $\eps_0=\eps_{-1}\stackrel{as}=0$. We consider two different simple sizes, $T=1\,500$ (large sample) and $T=150$ (small sample). This is primarily to show that convergence
\[
\frac{1}{T}\sum_{t=1}^{T}\theta_t\stackrel{as}\lrt b_c,\quad T\rt\infty,
\]
implies that the procedure of parameters estimation of the Split--MA(1) model can be applied in the case of short time series (see \citealp[pp. 57]{Split-BREAK}). Naturally, the number of observations ($T$) implies the different values of the appropriate estimation errors, which will be also investigated.

In the first estimation step, we have computed the estimates obtained by the method of moments. The autocorrelation function of the Split-MA(1) process $(X_t)$ is given by
\begin{equation}\label{Eq.5.2}
\rho(h)=\mathrm{Corr}(X_{t+h}, X_t)=\left\{%
\begin{array}{ll}
1, & \quad h=0, \\\
-b_c/(b_c+1), &\quad h=\pm 1, \\\
0, & \quad \hbox{otherwise}, \\
\end{array}%
\right.
\end{equation}
and as the estimates of $b_c$ we used the statistic
$ \widetilde{b}_c=-{\hat{\rho}_{_T}(1)}(1+\hat{\rho}_{_T}(1))^{-1},$
where $\hat{\rho}_{_T}(1)$ is the empirical first correlation
of $(X_t)$. Note that the inequalities $0<\widetilde{b}_c<1$ hold if and only if $-0,5< \hat{\rho}_{_T}(1) < 0$.
After that, according to Eq.(\ref{Cov X}), the estimate of the $\sigma^2$ can be computed as $\widetilde{\sigma}^2=\hat{\gamma}_X(0)(1+\widetilde{b}_c)^{-1}$, where $\hat{\gamma}_X(0)$ is the empirical variance of $(X_t)$. Finally, by solving the equation
$P\big\{\eps_t^2\leq
c\big\}=\widetilde{b}_c$ with respect to $c$, we obtain the
estimate of the critical value $\widetilde{c}={\widetilde{\sigma}^2} F^{-1}({\widetilde{b}_c})$, where $F(x)$ is the CDF of $\chi_1^2$ distributed random variable.
Using some well-known facts about the continuity of the stochastic convergences (cf. \citealp[pp. 24,118]{Serfling}), the strong consistency and asymptotic normality of the mentioned estimates can be proven (see, for more details \citealp{Split-BREAK}). In the following part, we describe in detail the ECF procedure for parameters estimation of the Split--MA model, and we also compare its efficiency with the estimates obtained by the method of moment.

\subsection{Computing the ECF--based estimator}
As we mentioned above, the ECF estimation procedure of the  parameters of Split--MA(1) process is based on  the minimization of the following double integral
\begin{equation}\label{Eq.5.1}
S_T^{(2)}(\theta)=\iint_{\R^2} g(u_1,u_2)\abs{\varphi_X^{(2)}(u_1,u_2;\theta)-\widetilde{\varphi}_T
	(u_1,u_2)}^2 \D u_1 \D u_2,
\end{equation}
with respect to the weight function $g:\R^2\to \R^+$. In our investigation, some typical exponential weight functions are considered. These functions put more weights around the origin, which is in accordance with the fact that CF in this point contains the most of information about the probability distribution of estimated model. On the other hand, exponential weights have a numerical advantage, because the integral in (\ref{Eq.5.1}) can be numerically approximated by using some $N$-point cubature formula
\begin{equation}\label{Cubat}
I(f;g):=\iint_{\R^2} g(u_1,u_2)f(u_1,u_2)du_1du_2\approx C_N(f):=\sum_{j=1}^{N}\omega_j f(u_{1j},u_{2j}),
\end{equation}
where $(u_{1j},u_{2j})\in\R^2$ are the cubature nodes and $\omega_j$ are the corresponding weight coefficients.

A particular problem here is a choice of the weight function $g(u_1,u_2)$. For this purpose, we consider the weight functions $g_k(u_1,u_2)=\exp(-\frac{k}{2}(u_1^2+u_2^2))$,  where $k\in\{1,2,3\}$.
For all of these weights we use a product cubature formula based on the one-dimensional  Gauss-Radau formula (cf. \citealp[pp. 329--330]{Mas_Mil2008} or \citealp{GVM2015}) with respect to an exponential weight on $(0,+\infty)$. Namely, introducing the polar coordinates $u_1=r\cos\theta$ and
$u_2=r\sin\theta$, the integral $I(f;g)$ in (\ref{Cubat}) is reduced to
\begin{equation}\label{GRF}
I(f;g)=\int_0^{+\infty} r \E^{-\gamma r^2}S(r)\,{\D}r,
\end{equation}
where  $S(r)$ is given by
\begin{equation}\label{TRF}
S(r)=\int_{\pi}^{\pi}f(r\cos\theta,r\sin\theta)\,{\D}\theta.
\end{equation}
The integral (\ref{TRF}) can be approximated by the composite trapezoidal rule in  $4m$ points $\theta_j=-\pi+j\pi/(2m)$, $j=0,1,\ldots,4m$, as
\[S(r)\approx S_m(r)=\frac{2\pi}{4m}\left\{\frac12f(-r,0)+\sum_{j=1}^{4m-1} f(r\cos\theta_j,r\sin\theta_j)+\frac12f(-r,0)\right\}.\]
Using the nodes
$x_\nu=\cos({\nu\pi}/{2m}),\; y_\nu=\sin({\nu\pi}/{2m}),\;
\nu=1,\ldots,m,$
after certain transformations, $S_m(r)$ can be represented in the form
\[S_m(r)=\frac{\pi}{2m}\sum_{\nu=1}^m\Bigl[f(r x_\nu,ry_\nu)+f(-r x_\nu,-ry_\nu)+ f(r y_\nu,-rx_\nu)+f(-r y_\nu,rx_\nu)\Bigr].\]
In this way, we obtain the cubature formula
\begin{eqnarray*}
C_N(f) &=& 2\pi A_0f(0,0)+\frac{\pi}{2m}\sum_{k=1}^nA_k\sum_{\nu=1}^m\bigl[f(r_k x_\nu,r_ky_\nu)+f(-r_k x_\nu,-r_ky_\nu)\\
&&+ f(r_k y_\nu,-r_kx_\nu)+f(-r_k y_\nu,r_kx_\nu)\bigr],
\end{eqnarray*}
where $A_k$ are weights of the one--dimensional $(n+1)$-point Gauss--Radau formula
\begin{equation*}\label{G-Radau}
\int_0^{+\infty} r \E^{-\gamma r^\alpha}S(r)\,{\D}r\approx A_0\, S(0)+\sum_{k=1}^nA_k\,S(r_k).
\end{equation*}
On the other hand, the nodes $r_k$ are zeros of the polynomial $\pi_n(r)$ orthogonal on $(0,+\infty)$ with respect to the exponential weight function  $r\mapsto r^2 \E^{-\gamma r^2}$. In our calculations we use $C_N$ with $N=81$ nodes ($n=5$, $m=4$). The numerical construction of the Gauss-Radau formulas can be done by the {\sc Mathematica} package {\tt  ``OrthogonalPolynomials''}, for an arbitrary number of points (see, for more details \citealp{GVM1,GVM2}).

After numerical construction of cubature rules, the objective function (\ref{Eq.5.1}) is minimized by a Nelder-Mead method, and the estimation procedure is realized by the original authors' codes written in statistical programming language ``R". The estimates obtained by the method of moments were used as the initial estimated values, and the performance of these and ECF estimates was examined in case of the simplest, Split-MA$(1)$  model. For a true value of the parameter it was chosen the vector $\theta_0=(b_c, \sigma^2)=(0.6827,1)$, where $c=\sigma^2F^{-1}(b_c)=1$.

\subsection{Simulations Results}
We apply the ECF method, with the initial values that were obtained by the previously described method of moments estimation procedure. In this way, we compute the ECF estimates of parameters $b_c, c,\sigma^2$ of the Split--MA$(1)$ model. Their summarized values, i.e. the true parameter values (TRUE), averages of estimated parameters (MEAN), together with their minimums (MIN), maximums (MAX), bias (BIAS) and the corresponding root mean squared errors (RMSE), are set in rows of the Tables \ref{Tab:1} and \ref{Tab:2}. In the first (numerical) column of both Tables there are the estimated values of the initial estimates, obtained by the method of moments. The following three columns contain the estimated parameters' values obtained by the ECF procedures, with respect to the weights $g_k(u_1,u_2)$, $k=1,2,3$. As it can be seen, in comparison to initial estimates, the ECF estimates have a smaller estimation errors.  Also, the averages of the ECF estimates of the all estimated parameters  are close to their true values.

\begin{table}[htbp]{\small
		\caption{Summarized values of the parameters' estimators of Split--MA$(1)$ process, obtained by Monte Carlo study of the model.}\label{Tab:1}
		\begin{tabular}{clcccc}\hline
			\multicolumn{2}{c}{\raisebox{-3.5mm}[0pt]{Parameters}}
			& \raisebox{-2.5mm}[0pt]{Initial} & \multicolumn{3}{c}{\raisebox{-2.5mm}[0pt]{ECF estimates/weights}}
			\vspace{1.5mm}\tabularnewline
			\cline{4-6}
			& & \raisebox{1.5mm}[0pt]{\ estimates\ } & \raisebox{-2mm}[0pt]{$g_1(u_1,u_2)$} & \raisebox{-2mm}[0pt]{$g_2(u_1,u_2)$} & \raisebox{-2mm}[0pt]{$g_3(u_1,u_2)$}
			\vspace{1.2mm}\tabularnewline
			\hline
			&\raisebox{-1mm}[0pt]{TRUE} &\raisebox{-1mm}[0pt]{0.6827}
			&\raisebox{-1mm}[0pt]{0.6827}
			&\raisebox{-1mm}[0pt]{0.6827} &\raisebox{-1mm}[0pt]{0.6827}
			\tabularnewline	
			& \raisebox{-1mm}[0pt]{MIN}
			& \raisebox{-1mm}[0pt]{0.2661}
			&\raisebox{-1mm}[0pt]{0.4093} &\raisebox{-1mm}[0pt]{0.3999} &\raisebox{-1mm}[0pt]{0.5131}
		
			\tabularnewline
			&  \raisebox{-1mm}[0pt]{MEAN} &\raisebox{-1mm}[0pt]{0.6781}
			&\raisebox{-1mm}[0pt]{0.6867}
			&\raisebox{-1mm}[0pt]{0.6865} &\raisebox{-1mm}[0pt]{0.6859}
			
			\tabularnewline
\raisebox{1.5mm}[0pt]{${b_c}$}
			&  \raisebox{-1mm}[0pt]{MAX}  &  \raisebox{-1mm}[0pt]{0.9962} &
			\raisebox{-1mm}[0pt]{0.9065} & \raisebox{-1mm}[0pt]{0.9967} & \raisebox{-1mm}[0pt]{0.8959}
			
			\tabularnewline
			&  \raisebox{-1mm}[0pt]{BIAS}  &  \raisebox{-1mm}[0pt]{-4.55E-03} &
			\raisebox{-1mm}[0pt]{4.04E-03} & \raisebox{-1mm}[0pt]{3.81E-03}
			& \raisebox{-1mm}[0pt]{3.23E-03}
			
			\tabularnewline
			&  \raisebox{-1mm}[0pt]{RMSE}  &  \raisebox{-1mm}[0pt]{0.1452} &
			\raisebox{-1mm}[0pt]{0.0731} & \raisebox{-1mm}[0pt]{0.0888} & \raisebox{-1mm}[0pt]{0.0641}
				
			\vspace{1mm}\tabularnewline
			\hline
			
				&  \raisebox{-1mm}[0pt]{TRUE} &\raisebox{-1mm}[0pt]{1.0000}
				&\raisebox{-1mm}[0pt]{1.0000}
				&\raisebox{-1mm}[0pt]{1.0000} &\raisebox{-1mm}[0pt]{1.0000}
				\tabularnewline	
				& \raisebox{-1mm}[0pt]{MIN}
				& \raisebox{-1mm}[0pt]{0.3957}
				&\raisebox{-1mm}[0pt]{0.6078} &\raisebox{-1mm}[0pt]{0.5696} &\raisebox{-1mm}[0pt]{0.5711}
				
				\tabularnewline
				&  \raisebox{-1mm}[0pt]{MEAN} &\raisebox{-1mm}[0pt]{1.0477}
				&\raisebox{-1mm}[0pt]{1.0256}
				&\raisebox{-1mm}[0pt]{1.0282} &\raisebox{-1mm}[0pt]{1.0226}
				
				\tabularnewline
				\raisebox{1.5mm}[0pt]{$c$}
				&  \raisebox{-1mm}[0pt]{MAX} &  \raisebox{-1mm}[0pt]{1.9911} &
				\raisebox{-1mm}[0pt]{1.5952} & \raisebox{-1mm}[0pt]{1.6843} & \raisebox{-1mm}[0pt]{1.6893}
				
				\tabularnewline
				&  \raisebox{-1mm}[0pt]{BIAS}  &  \raisebox{-1mm}[0pt]{4.77E-02} &
				\raisebox{-1mm}[0pt]{2.56-02} & \raisebox{-1mm}[0pt]{2.82E-02} & \raisebox{-1mm}[0pt]{2.26E-02}
				
				\tabularnewline
				&  \raisebox{-1mm}[0pt]{RMSE}  &  \raisebox{-1mm}[0pt]{0.2880} &
				\raisebox{-1mm}[0pt]{0.1464} & \raisebox{-1mm}[0pt]{0.2291} & \raisebox{-1mm}[0pt]{0.1740}
				
				\vspace{1mm}\tabularnewline
				\hline
		
			&  \raisebox{-1mm}[0pt]{TRUE} &\raisebox{-1mm}[0pt]{1.0000}
			&\raisebox{-1mm}[0pt]{1.0000}
			&\raisebox{-1mm}[0pt]{1.0000} &\raisebox{-1mm}[0pt]{1.0000}
			\tabularnewline	
			& \raisebox{-1mm}[0pt]{MIN}
			& \raisebox{-1mm}[0pt]{0.6534}
			&\raisebox{-1mm}[0pt]{0.7822} &\raisebox{-1mm}[0pt]{0.7641} &\raisebox{-1mm}[0pt]{0.8292}
			
			\tabularnewline
			&  \raisebox{-1mm}[0pt]{MEAN} &\raisebox{-1mm}[0pt]{0.9880}
			&\raisebox{-1mm}[0pt]{0.9984}
			&\raisebox{-1mm}[0pt]{0.9963} &\raisebox{-1mm}[0pt]{0.9984}
			
			\tabularnewline
			\raisebox{1.5mm}[0pt]{$\sigma^2$}
			&  \raisebox{-1mm}[0pt]{MAX}  &  \raisebox{-1mm}[0pt]{1.5151} &
			\raisebox{-1mm}[0pt]{1.2311} & \raisebox{-1mm}[0pt]{1.2974} & \raisebox{-1mm}[0pt]{1.2128}
			
			\tabularnewline
			&  \raisebox{-1mm}[0pt]{BIAS}  &  \raisebox{-1mm}[0pt]{-1.20E-02} &
			\raisebox{-1mm}[0pt]{-1.60E-03} & \raisebox{-1mm}[0pt]{-3.70E-03} & \raisebox{-1mm}[0pt]{-1.63E-03}
			
			\tabularnewline
			&  \raisebox{-1mm}[0pt]{RMSE}  &  \raisebox{-1mm}[0pt]{0.1482} &
			\raisebox{-1mm}[0pt]{0.0680} & \raisebox{-1mm}[0pt]{0.0851} & \raisebox{-1mm}[0pt]{0.0597}
			
			\vspace{1mm}\tabularnewline
			\hline
			
			\raisebox{-.5mm}[0pt]{\small Sample size:} &	\raisebox{-.5mm}[0pt]{\small $T=150$} \\  \raisebox{-.5mm}[0pt]{\small Weights:\quad\;\;} & \multicolumn{4}{l}{\raisebox{-.5mm}[0pt]{$g_k(u_1,u_2)=\exp(-k(u_1^2+u_2^2)/2)$,\, $k\in\{1,2,3\}\;$}}
		\end{tabular}}
	\end{table}
	
	\begin{table}[htbp]{\small
			\caption{Summarized values of the parameters' estimators of Split--MA$(1)$ process, obtained by Monte Carlo study of the model.}\label{Tab:2}
			\begin{tabular}{clcccc}\hline
				\multicolumn{2}{c}{\raisebox{-3.5mm}[0pt]{Parameters}}
				& \raisebox{-2.5mm}[0pt]{Initial} & \multicolumn{3}{c}{\raisebox{-2.5mm}[0pt]{ECF estimates/weights}}
				\vspace{1.5mm}\tabularnewline
				\cline{4-6}
				& & \raisebox{1.5mm}[0pt]{\ estimates\ } & \raisebox{-2mm}[0pt]{$g_1(u_1,u_2)$} & \raisebox{-2mm}[0pt]{$g_2(u_1,u_2)$} & \raisebox{-2mm}[0pt]{$g_3(u_1,u_2)$}
				\vspace{1.2mm}\tabularnewline
				\hline
				&  \raisebox{-1mm}[0pt]{TRUE} &\raisebox{-1mm}[0pt]{0.6827}
				&\raisebox{-1mm}[0pt]{0.6827}
				&\raisebox{-1mm}[0pt]{0.6827} &\raisebox{-1mm}[0pt]{0.6827}
				\tabularnewline	
				& \raisebox{-1mm}[0pt]{MIN}
				& \raisebox{-1mm}[0pt]{0.5212}
				&\raisebox{-1mm}[0pt]{0.5773} &\raisebox{-1mm}[0pt]{0.5494} &\raisebox{-1mm}[0pt]{0.5442}
				
				\tabularnewline
				&  \raisebox{-1mm}[0pt]{MEAN} &\raisebox{-1mm}[0pt]{0.6851}
				&\raisebox{-1mm}[0pt]{0.6833}
				&\raisebox{-1mm}[0pt]{0.6841} &\raisebox{-1mm}[0pt]{0.6830}
				
				\tabularnewline
				\raisebox{1.5mm}[0pt]{${b_c}$}
				&  \raisebox{-1mm}[0pt]{MAX}  &  \raisebox{-1mm}[0pt]{0.8686} &
				\raisebox{-1mm}[0pt]{0.8169} & \raisebox{-1mm}[0pt]{0.8304} & \raisebox{-1mm}[0pt]{0.8028}
				
				\tabularnewline
				&  \raisebox{-1mm}[0pt]{BIAS}  &  \raisebox{-1mm}[0pt]{2.44E-03} &
				\raisebox{-1mm}[0pt]{6.39E-04} & \raisebox{-1mm}[0pt]{1.42E-03} & \raisebox{-1mm}[0pt]{2.64E-04}
				
				\tabularnewline
				&  \raisebox{-1mm}[0pt]{RMSE}  &  \raisebox{-1mm}[0pt]{0.0532} &
				\raisebox{-1mm}[0pt]{0.0406} & \raisebox{-1mm}[0pt]{0.0520} & \raisebox{-1mm}[0pt]{0.0417}
				
				\vspace{1mm}\tabularnewline
				\hline
				
				&  \raisebox{-1mm}[0pt]{TRUE} &\raisebox{-1mm}[0pt]{1.0000}
				&\raisebox{-1mm}[0pt]{1.0000}
				&\raisebox{-1mm}[0pt]{1.0000} &\raisebox{-1mm}[0pt]{1.0000}
				\tabularnewline	
				& \raisebox{-1mm}[0pt]{MIN}
				& \raisebox{-1mm}[0pt]{0.5859}
				&\raisebox{-1mm}[0pt]{0.6532} &\raisebox{-1mm}[0pt]{0.7176} &\raisebox{-1mm}[0pt]{0.6876}
				
				\tabularnewline
				&  \raisebox{-1mm}[0pt]{MEAN} &\raisebox{-1mm}[0pt]{1.0190}
				&\raisebox{-1mm}[0pt]{1.0117}
				&\raisebox{-1mm}[0pt]{1.0125} &\raisebox{-1mm}[0pt]{0.9916}
				
				\tabularnewline
				\raisebox{1.5mm}[0pt]{$c$}
				&  \raisebox{-1mm}[0pt]{MAX} &  \raisebox{-1mm}[0pt]{1.5952} &
				\raisebox{-1mm}[0pt]{1.4944} & \raisebox{-1mm}[0pt]{1.4601} & \raisebox{-1mm}[0pt]{1.4736}
				
				\tabularnewline
				&  \raisebox{-1mm}[0pt]{BIAS}  &  \raisebox{-1mm}[0pt]{1.90E-02} &
				\raisebox{-1mm}[0pt]{1.17E-02} & \raisebox{-1mm}[0pt]{1.25E-02} & \raisebox{-1mm}[0pt]{-7.42E-03}
				
				\tabularnewline
				&  \raisebox{-1mm}[0pt]{RMSE}  &  \raisebox{-1mm}[0pt]{0.1886} &
				\raisebox{-1mm}[0pt]{0.1527} & \raisebox{-1mm}[0pt]{0.1329} & \raisebox{-1mm}[0pt]{0.0847}
				
				\vspace{1mm}\tabularnewline
				\hline
				
				&  \raisebox{-1mm}[0pt]{TRUE} &\raisebox{-1mm}[0pt]{1.0000}
				&\raisebox{-1mm}[0pt]{1.0000}
				&\raisebox{-1mm}[0pt]{1.0000} &\raisebox{-1mm}[0pt]{1.0000}
				\tabularnewline	
				& \raisebox{-1mm}[0pt]{MIN}
				& \raisebox{-1mm}[0pt]{0.8375}
				&\raisebox{-1mm}[0pt]{0.8883} &\raisebox{-1mm}[0pt]{0.8436} &\raisebox{-1mm}[0pt]{0.8836}
				
				\tabularnewline
				&  \raisebox{-1mm}[0pt]{MEAN} &\raisebox{-1mm}[0pt]{1.0012}
				&\raisebox{-1mm}[0pt]{0.9992}
				&\raisebox{-1mm}[0pt]{0.9988} &\raisebox{-1mm}[0pt]{0.9996}
				
				\tabularnewline
				\raisebox{1.5mm}[0pt]{$\sigma^2$}
				&  \raisebox{-1mm}[0pt]{MAX}  &  \raisebox{-1mm}[0pt]{1.1730} &
				\raisebox{-1mm}[0pt]{1.1131} & \raisebox{-1mm}[0pt]{1.1518} & \raisebox{-1mm}[0pt]{1.1194}
				
				\tabularnewline
				&  \raisebox{-1mm}[0pt]{BIAS}  &  \raisebox{-1mm}[0pt]{1.21E-03} &
				\raisebox{-1mm}[0pt]{-7.91E-04} & \raisebox{-1mm}[0pt]{-1.20E-03} & \raisebox{-1mm}[0pt]{-4.04E-04}
				
				\tabularnewline
				&  \raisebox{-1mm}[0pt]{RMSE}  &  \raisebox{-1mm}[0pt]{0..0507} &
				\raisebox{-1mm}[0pt]{0.0374} & \raisebox{-1mm}[0pt]{0.0489} & \raisebox{-1mm}[0pt]{0.0380}
				
				\vspace{1mm}\tabularnewline
				\hline
				
			\raisebox{-.5mm}[0pt]{\small Sample size:} &	\raisebox{-.5mm}[0pt]{\small $T=1\,500$} \\  \raisebox{-.5mm}[0pt]{\small Weights:\quad\;\;} & \multicolumn{4}{l}{\raisebox{-.5mm}[0pt]{$g_k(u_1,u_2)=\exp(-k(u_1^2+u_2^2)/2)$,\, $k\in\{1,2,3\}\;$}}
			\end{tabular}}
		\end{table}

	What follows is an empirical investigation of the asymptotic properties (strong consistency and asymptotic normality) of the parameters' estimators of our model, which were formally proved in Theorem \ref{Thm:4}. One should remark that in our simulation study, for all of the observed weights, the ECF estimates have a more prominent stability, compared to the appropriate initial estimates. This can be easily seen according to their estimated errors, as well as in Figures \ref{Fig.5} and \ref{Fig.6}: the initial estimates (panels above), and the ECF estimates with the weights $g_k(u_1,u_2)$, $k=1,2,3$ (panels bellow).
	
	Furthermore, we have obtained some testing results concerning the AN of initial and ECF estimates of our model, which is expected according to Theorem \ref{Thm:4}. For this purpose, we have used Anderson-Darling and Cramer-von Mises tests of normality, which test statistics (labeled as $AD$ and $W$, respectively), as well as the corresponding $p$--values, are computed by using the appropriate procedures from the R-package ``nortest'', authorized by \citet{Gross}. Additionally, we have used the composite Jarque-Bera (JB) test of normality, with specified number of the Monte-Carlo replications, realized in R-package ``normtest" (\citealp{Normtest}). These values are shown in Tables \ref{Tab:3} and \ref{Tab:4}, where one can see that the AN is mostly confirmed for the ECF estimates of parameters $b_c$ and $\sigma^2$.
	On the other hand, it varies to a some degree in the case of parameter $c$, as a consequence of the multi--stage estimation procedure of this model's parameters.
	Certain confirmations of these facts are also given in the histograms of their empirical distribution, presented in Figures \ref{Fig.5} and \ref{Fig.6}.
	
	\begin{figure}[htbp]
		\begin{center}
			\includegraphics[width=.95\textwidth]{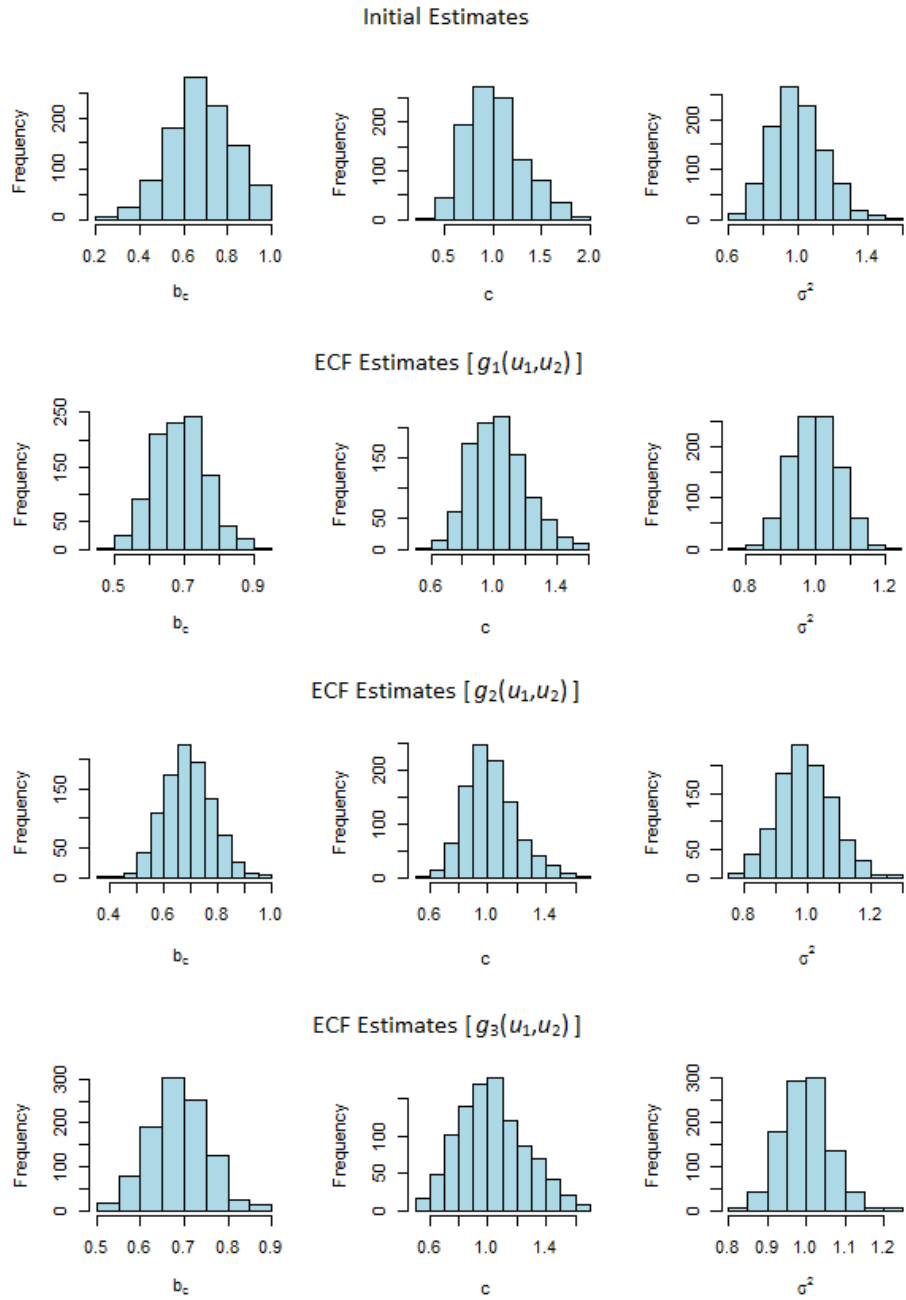}
			\caption{Empirical distributions of estimated parameters. (Sample size: $T=150$.)}\label{Fig.5}
		\end{center}
	\end{figure}
	
	\begin{figure}[htbp]
		\begin{center}
			\includegraphics[width=.95\textwidth]{Figure6.eps}
			\caption{Empirical distributions of estimated parameters. (Sample size: $T=1\,500$.)}\label{Fig.6}
		\end{center}
	\end{figure}

	\begin{table}[htbp]{\small
			\caption{Summarized results of normality test statistics (Sample size: $T=150$.)}\label{Tab:3}
			\begin{tabular}{cccccc}\hline
				\raisebox{-4.5mm}[0pt]{Parameters} &	\raisebox{-4.5mm}[0pt]{Statistics}
				& \raisebox{-3.5mm}[0pt]{Initial} & \multicolumn{3}{c}{\raisebox{-3.5mm}[0pt]{ECF estimates/weights}}
				\vspace{1.5mm}\tabularnewline
				\cline{4-6}
				& & \raisebox{1.5mm}[0pt]{\ estimates\ } & \raisebox{-2mm}[0pt]{$g_1(u_1,u_2)$} & \raisebox{-2mm}[0pt]{$g_2(u_1,u_2)$} & \raisebox{-2mm}[0pt]{$g_3(u_1,u_2)$}
				\vspace{1.2mm}\tabularnewline
				\hline
				& \raisebox{-.25mm}[0pt]{$AD$} &
				\raisebox{-.25mm}[0pt]{0.8398$^*$} &
				\raisebox{-.25mm}[0pt]{0.6280\;} &
				\raisebox{-.25mm}[0pt]{0.3471} &
				\raisebox{-.25mm}[0pt]{0.4576\;}
				\tabularnewline
				&  \raisebox{.25mm}[0pt]{($p$--value)} &\raisebox{.25mm}[0pt]{(0.0305)}
				& \raisebox{.25mm}[0pt]{(0.1016)}
				&\raisebox{.25mm}[0pt]{(0.4794)}
				&\raisebox{.25mm}[0pt]{(0.2642)}
				\tabularnewline
			 &	\raisebox{-.5mm}[0pt]{$W$} &
				\raisebox{-.5mm}[0pt]{0.1266$^*$}
				& \raisebox{-.5mm}[0pt]{0.1043\,}
				& \raisebox{-.5mm}[0pt]{0.0822\;} &  \raisebox{-.5mm}[0pt]{0.0698\;}				
				\tabularnewline
				\raisebox{1.5mm}[0pt]{${b_c}$} &\raisebox{0mm}[0pt]{($p$--value)} &\raisebox{0mm}[0pt]{(0.0488)}
				&\raisebox{0mm}[0pt]{(0.0980)}
				&\raisebox{0mm}[0pt]{(0.1937)}
				&\raisebox{0mm}[0pt]{(0.2807)}
				\vspace{.2mm}\tabularnewline
			
&\raisebox{-.75mm}[0pt]{$JB$} &
\raisebox{-.75mm}[0pt]{7.7128$^*$}
& \raisebox{-.75mm}[0pt]{5.1060}
& \raisebox{-.75mm}[0pt]{3.6464} &  \raisebox{-.75mm}[0pt]{2.0886}					
\tabularnewline
&\raisebox{-.25mm}[0pt]{($p$--value)}
&\raisebox{-.25mm}[0pt]{\,(0.029)}
&\raisebox{-.25mm}[0pt]{\,(0.073)}
&\raisebox{-.25mm}[0pt]{\,(0.142)}
&\raisebox{-.25mm}[0pt]{\, (0.326)}
\vspace{1.2mm}\tabularnewline
	\hline
			& \raisebox{-.25mm}[0pt]{$AD$} &
			\raisebox{-.25mm}[0pt]{2.1285$^{**}$} &
			\raisebox{-.25mm}[0pt]{0.8923$^{*}$} &
			\raisebox{-.25mm}[0pt]{1.2947$^{**}$} &
			\raisebox{-.25mm}[0pt]{0.8813$^{*}$}
			\tabularnewline
			&  \raisebox{.25mm}[0pt]{($p$--value)} &\raisebox{.25mm}[0pt]{(2.07E-05)}
			& \raisebox{-.25mm}[0pt]{(0.0226)}
			&\raisebox{.25mm}[0pt]{(2.31E-03)}
			&\raisebox{.25mm}[0pt]{(0.0246)}
			\tabularnewline
				
			&	\raisebox{-.5mm}[0pt]{$W$} &
				\raisebox{-.5mm}[0pt]{0.3517$^{**}$}
			& \raisebox{-.5mm}[0pt]{0.1179}
			& \raisebox{-.5mm}[0pt]{0.1722$^{*}$} &  \raisebox{-.5mm}[0pt]{0.1640$^{*}$}				
			\tabularnewline
			\raisebox{1.5mm}[0pt]{${c}$}
			&\raisebox{0mm}[0pt]{($p$--value)} &\raisebox{0mm}[0pt]{(8.71E-05)}
			&\raisebox{0mm}[0pt]{(0.0639)}
			&\raisebox{0mm}[0pt]{(0.0122)}
			&\raisebox{0mm}[0pt]{(0.0156)}
			\vspace{.2mm}\tabularnewline
			
				&\raisebox{-.75mm}[0pt]{$JB$} &
				\raisebox{-.75mm}[0pt]{22.670$^{**}$}
				& \raisebox{-.75mm}[0pt]{7.6551$^{*}$}
				& \raisebox{-.75mm}[0pt]{13.350$^{**}$} &  \raisebox{-.75mm}[0pt]{7.0815$^{*}$}					
				\tabularnewline
			 &\raisebox{-.25mm}[0pt]{($p$--value)}
			 &\raisebox{-.25mm}[0pt]{(0.002)}
			&\raisebox{-.25mm}[0pt]{(0.021)}
			&\raisebox{-.25mm}[0pt]{(0.003)}
			&\raisebox{-.25mm}[0pt]{(0.036)}
			\vspace{1.2mm}\tabularnewline
		\hline
		
			& \raisebox{-.25mm}[0pt]{$AD$} &
			\raisebox{-.25mm}[0pt]{1.5831$^{**}$} &
			\raisebox{-.25mm}[0pt]{0.3479\;} &
			\raisebox{-.25mm}[0pt]{0.4613\;} &
			\raisebox{-.25mm}[0pt]{0.5573\;}
			\tabularnewline
			&  \raisebox{.25mm}[0pt]{($p$--value)} &\raisebox{.25mm}[0pt]{(1.02E-04)}
			& \raisebox{-.25mm}[0pt]{(0.4774)}
			&\raisebox{.25mm}[0pt]{(0.2587)}
			&\raisebox{.25mm}[0pt]{(0.1199)}
			\tabularnewline
			
			&	\raisebox{-.5mm}[0pt]{$W$} &
			\raisebox{-.5mm}[0pt]{0.2587$^{**}$}
			& \raisebox{-.5mm}[0pt]{0.0396\;}
			& \raisebox{-.5mm}[0pt]{0.0668\;} &  \raisebox{-.5mm}[0pt]{0.0623\;}				
			\tabularnewline
			\raisebox{1.5mm}[0pt]{$\sigma^{2}$}
			&\raisebox{0mm}[0pt]{($p$--value)} &\raisebox{0mm}[0pt]{(1.02E-04)}
			&\raisebox{0mm}[0pt]{(0.6898)}
			&\raisebox{0mm}[0pt]{(0.3080)}
			&\raisebox{0mm}[0pt]{(0.3523)}
			\vspace{.2mm}\tabularnewline
			
			&\raisebox{-.75mm}[0pt]{$JB$} &
			\raisebox{-.75mm}[0pt]{19.085$^{**}$}
			& \raisebox{-.75mm}[0pt]{0.6704\;}
			& \raisebox{-.75mm}[0pt]{2.1195\;} &  \raisebox{-.75mm}[0pt]{0.3181\;}					
			\tabularnewline
			&\raisebox{-.25mm}[0pt]{($p$--value)}
			&\raisebox{-.25mm}[0pt]{(2.E-03)}
			&\raisebox{-.25mm}[0pt]{(0.6860)}
			&\raisebox{-.25mm}[0pt]{(0.310)}
			&\raisebox{-.25mm}[0pt]{(0.836)}
			\vspace{1.2mm}\tabularnewline
		\hline
	\raisebox{-.5mm}[0pt]{\small $^{*}p<0.05$}  & \raisebox{-.5mm}[0pt]{\small$^{**}p<0.01$\qquad}
			\end{tabular}\vskip1cm

	\caption{Summarized results of normality test statistics. (Sample size: $T=1\,500$.)}\label{Tab:4}
			\begin{tabular}{cccccc}\hline
				\raisebox{-4.5mm}[0pt]{Parameters} &	\raisebox{-4.5mm}[0pt]{Statistics}
				& \raisebox{-3.5mm}[0pt]{Initial} & \multicolumn{3}{c}{\raisebox{-3.5mm}[0pt]{ECF estimates/weights}}
				\vspace{1.5mm}\tabularnewline
				\cline{4-6}
				& & \raisebox{1.5mm}[0pt]{\ estimates\ } & \raisebox{-2mm}[0pt]{$g_1(u_1,u_2)$} & \raisebox{-2mm}[0pt]{$g_2(u_1,u_2)$} & \raisebox{-2mm}[0pt]{$g_3(u_1,u_2)$}
				\vspace{1.2mm}\tabularnewline
				\hline
				
				& \raisebox{-.25mm}[0pt]{$AD$} &
				\raisebox{-.25mm}[0pt]{0.5039\;} &
				\raisebox{-.25mm}[0pt]{0.3845\;} &
				\raisebox{-.25mm}[0pt]{0.6432} &
				\raisebox{-.25mm}[0pt]{0.3802\;}
				\tabularnewline
				&  \raisebox{.25mm}[0pt]{($p$--value)} &\raisebox{.25mm}[0pt]{(0.2037)}
				& \raisebox{.25mm}[0pt]{(0.3943)}
				&\raisebox{.25mm}[0pt]{(0.0932)}
				&\raisebox{.25mm}[0pt]{(0.4025)}
				\tabularnewline
				&	\raisebox{-.5mm}[0pt]{$W$} &
				\raisebox{-.5mm}[0pt]{0.0513\;}
				& \raisebox{-.5mm}[0pt]{0.0664\;}
				& \raisebox{-.5mm}[0pt]{0.1138\;} &  \raisebox{-.5mm}[0pt]{0.0593\;}				
				\tabularnewline
				\raisebox{1.5mm}[0pt]{${b_c}$} &\raisebox{0mm}[0pt]{($p$--value)} &\raisebox{0mm}[0pt]{(0.4924)}
				&\raisebox{0mm}[0pt]{(0.3118)}
				&\raisebox{0mm}[0pt]{(0.0727)}
				&\raisebox{0mm}[0pt]{(0.3857)}
				\vspace{.2mm}\tabularnewline
				
	&\raisebox{-.75mm}[0pt]{$JB$} &
	\raisebox{-.75mm}[0pt]{7.6025$^*$}
	& \raisebox{-.75mm}[0pt]{0.5500\,}
	& \raisebox{-.75mm}[0pt]{2.7498\,} &  \raisebox{-.75mm}[0pt]{1.5539\; }					
	\tabularnewline
	&\raisebox{-.25mm}[0pt]{($p$--value)}
	&\raisebox{-.25mm}[0pt]{(0.031)}
	&\raisebox{-.25mm}[0pt]{(0.732)}
	&\raisebox{-.25mm}[0pt]{(0.248)}
	&\raisebox{-.25mm}[0pt]{(0.462)}
	\vspace{1.2mm}\tabularnewline
	\hline
	
	& \raisebox{-.25mm}[0pt]{$AD$} &
	\raisebox{-.25mm}[0pt]{0.9288$^{*}$} &
	\raisebox{-.25mm}[0pt]{0.6048} &
	\raisebox{-.25mm}[0pt]{1.0144$^{*}$} &
	\raisebox{-.25mm}[0pt]{0.7144}
	\tabularnewline
	&  \raisebox{.25mm}[0pt]{($p$--value)} &\raisebox{.25mm}[0pt]{(0.0184)}
	& \raisebox{-.25mm}[0pt]{(0.1159)}
	&\raisebox{.25mm}[0pt]{(0.0115)}
	&\raisebox{.25mm}[0pt]{(0.0622)}
	\tabularnewline
	
	&	\raisebox{-.5mm}[0pt]{$W$} &
	\raisebox{-.5mm}[0pt]{0.1517$^{*}$}
	& \raisebox{-.5mm}[0pt]{0.0898}
	& \raisebox{-.5mm}[0pt]{0.1644$^{*}$} &  \raisebox{-.5mm}[0pt]{0.1269$^{*}$}				
	\tabularnewline
	\raisebox{1.5mm}[0pt]{${c}$}
	&\raisebox{0mm}[0pt]{($p$--value)} &\raisebox{0mm}[0pt]{(0.0226)}
	&\raisebox{0mm}[0pt]{(0.1546)}
	&\raisebox{0mm}[0pt]{(0.0154)}
	&\raisebox{0mm}[0pt]{(0.0483)}
	\vspace{.2mm}\tabularnewline
	
	&\raisebox{-.75mm}[0pt]{$JB$} &
	\raisebox{-.75mm}[0pt]{6.6989$^{*}$}
	& \raisebox{-.75mm}[0pt]{2.1136\;}
	& \raisebox{-.75mm}[0pt]{6.1157$^{*}$} &  \raisebox{-.75mm}[0pt]{4.3429\; }					
	\tabularnewline
	&\raisebox{-.25mm}[0pt]{($p$--value)}
	&\raisebox{-.25mm}[0pt]{(0.042)}
	&\raisebox{-.25mm}[0pt]{(0.334)}
	&\raisebox{-.25mm}[0pt]{(0.048)}
	&\raisebox{-.25mm}[0pt]{(0.103)}
	\vspace{1.2mm}\tabularnewline
	\hline
				& \raisebox{-.25mm}[0pt]{$AD$} &
				\raisebox{-.25mm}[0pt]{0.3375\;} &
				\raisebox{-.25mm}[0pt]{0.3491\;} &
				\raisebox{-.25mm}[0pt]{0.6819\;} &
				\raisebox{-.25mm}[0pt]{0.3312\;}
				\tabularnewline
				&  \raisebox{.25mm}[0pt]{($p$--value)} &\raisebox{.25mm}[0pt]{(0.5039)}
				& \raisebox{-.25mm}[0pt]{(0.4745)}
				&\raisebox{.25mm}[0pt]{(0.0748)}
				&\raisebox{.25mm}[0pt]{(0.5122)}
				\tabularnewline
				
				&	\raisebox{-.5mm}[0pt]{$W$} &
				\raisebox{-.5mm}[0pt]{0.0555\;}
				& \raisebox{-.5mm}[0pt]{0.0548\;}
				& \raisebox{-.5mm}[0pt]{0.0934\;} &  \raisebox{-.5mm}[0pt]{0.0410\;}				
				\tabularnewline
				\raisebox{1.5mm}[0pt]{$\sigma^{2}$}
				&\raisebox{0mm}[0pt]{($p$--value)} &\raisebox{0mm}[0pt]{(0.4332)}
				&\raisebox{0mm}[0pt]{(0.4430)}
				&\raisebox{0mm}[0pt]{(0.1384)}
				&\raisebox{0mm}[0pt]{(0.6630)}
				\vspace{.2mm}\tabularnewline
				
				&\raisebox{-.75mm}[0pt]{$JB$} &
				\raisebox{-.75mm}[0pt]{2.1136\;}
				& \raisebox{-.75mm}[0pt]{1.8737\;}
				& \raisebox{-.75mm}[0pt]{3.1342\;} &  \raisebox{-.75mm}[0pt]{1.3905\; }					
				\tabularnewline
				&\raisebox{-.25mm}[0pt]{($p$--value)}
				&\raisebox{-.25mm}[0pt]{(0.337)}
				&\raisebox{-.25mm}[0pt]{(0.377)}
				&\raisebox{-.25mm}[0pt]{(0.194)}
				&\raisebox{-.25mm}[0pt]{(0.488)}
				\vspace{1.2mm}\tabularnewline
				\hline
				
				\raisebox{-.5mm}[0pt]{\small $^{*}p<0.05$}  & \raisebox{-.5mm}[0pt]{\small$^{**}p<0.01$\qquad}
			\end{tabular}

			}
		\end{table}

		\section{Application of the model}
		\label{sec:6}
		
		In this section, we describe a practical application of the GSB process of order $p=1$ in modelling dynamics of some financial series. For this purpose, we first observed the dynamics of the total values of trading 15 Serbian shares with the highest liquidity, integrated within the so--called BELEX15 financial index (Series A).
		This index was defined and methodologically processed at the end of September 2005. All its changes until the end of 2014 have been observed here,
		as the ``large" time series with a total of $T = 2\,330$ data. On the other hand, the total values of trading by the Minimum Price (MP) method on the Belgrade Stock Exchange (Series B) were also observed. It was chosen as an example of short time series with a total of (only) $ T = 127$ data.
		
		For both empirical data series (A and B), as a basic  financial series we observed the realization of  the log--volumes
		\[
		Y_t=\log\left(\sum_{j=1}^N S_t^{(j)}\cdot H_t^{(j)}\right),
		\]
		where $S_t^{(1)},\dots,S_t^{(N)}$ are the share prices (in Serbian dinars) and $H_t^{(1)},\dots,H_t^{(N)}$ are the volumes of trading, i.e., the number of shares which were traded on a certain day. In this way, the days of trading are used as successive data set $t=1,\dots, T$, and the estimates of the parameters were obtained from a realization of Split--MA(1) process $X_t=Y_t-Y_{t-1}$, where $X_1\stackrel{as}=0$.
		Estimated parameters' values from both series, along with the appropriate values of objective function $S_T^{(2)}$, are shown in the Tables \ref{Tab:3} and \ref{Tab:4}.\
		
	\begin{table}[htbp]
			\caption{Estimated values of the GSB parameters and error statistics of BELEX15 log--volumes (Series A).}\smallskip
			\label{Tab:3}{\small
				\begin{tabular}{lcccc}\hline\noalign{\smallskip}
				\raisebox{-4.5mm}[0pt]{Parameters}	& \raisebox{-3.5mm}[0pt]{Initial} & \multicolumn{3}{c}{\raisebox{-3.5mm}[0pt]{ECF estimates/weights}}
					\vspace{1.5mm}\tabularnewline
					\cline{3-5}
					&\raisebox{1.5mm}[0pt]{estimates} & \raisebox{-2mm}[0pt]{$g_1(u_1,u_2)$} & \raisebox{-2mm}[0pt]{$g_2(u_1,u_2)$} & \raisebox{-2mm}[0pt]{$g_3(u_1,u_2)$}
					\vspace{1.2mm}\tabularnewline
					\noalign{\smallskip}\hline\noalign{\smallskip}
					{$\hat{\rho}_T(1)$} & {-0.3973} &\, -- &\, -- &\, --
					\smallskip\\
					{${b}_c$} & {\; 0.6591} & {\; 0.5912} & {\; 0.4632} & {\; 0.5366}
					\smallskip\\
					{$c$} & {\;  0.5866} & {\; 0.6842} & {\; 0.3815} & {\; 0.5376}
					\smallskip\\
					{$\sigma^2$} & {\; 0.6467} & {\; 0.3752} & {\; 0.4381} & {\; 0.4645}
					\smallskip\\
				{${S}_T^{(2)}$} & {1.16E-5} & {1.62E-6} & {1.58E-6} & {8.68E-7}
					\\
					\noalign{\smallskip}\hline
				\end{tabular}}
			\end{table}\
			
For the initial values of parameters we took the estimates obtained by the method of moments, as in the previous simulations. After that, we obtained the ECF estimates of parameters for the previously described weights $g_k(u_1,u_2)$, $k=1,2,3$. Note that estimates of the first correlation of both series satisfy the inequality $-0,5< \hat{\rho}_{_T}(1) < 0$. Thus, according to (\ref{Eq.5.2}), the estimates of the parameter ${b}_c$, shown in the next  row of both tables, meet (the non-triviality) condition $0<b_c<1$. Also, the estimated values of parameters in the case of  ``small" Series B are more stable, but they have slightly higher values of estimated errors.\

	\begin{table}[htbp]
		\caption{Estimated values of the GSB parameters and error statistics of the MP method trading log--volumes (Series B).}\smallskip
		\label{Tab:4}{\small
			\begin{tabular}{lccccc}\hline\noalign{\smallskip}
		\raisebox{-4.5mm}[0pt]{Parameters}	& \raisebox{-3.5mm}[0pt]{Initial} & \multicolumn{3}{c}{\raisebox{-3.5mm}[0pt]{ECF estimates/weights}}
				\vspace{1.5mm}\tabularnewline
				\cline{3-5} 				&\raisebox{1.5mm}[0pt]{estimates} & \raisebox{-2mm}[0pt]{$g_1(u_1,u_2)$} & \raisebox{-2mm}[0pt]{$g_2(u_1,u_2)$} & \raisebox{-2mm}[0pt]{$g_3(u_1,u_2)$}
				\vspace{1.2mm}\tabularnewline
				\noalign{\smallskip}\hline\noalign{\smallskip}
				{$\hat{\rho}_T(1)$} & {-0.4771} & -- & -- & --
				\smallskip\\
				{${b}_c$} & {\; 0.9123} & {\; 0.8984} & {\; 0.9071} & {\; 0.9258}
				\smallskip\\
				{$c$} & {\; 6.7633} & {\; 6.0235} & {\; 6.4595} & {\; 7.5623}
				\smallskip\\
				{$\sigma^2$} & {\; 2.3195} & {\; 2.2468} & {\; 2.2876} & {\; 2.3716}
				\smallskip\\
				{${S}_T^{(2)}$} & {2.65E-5} & {4.21E-6} & {4.21E-6} & {2.35E-6}
				\\
				\noalign{\smallskip}\hline
			\end{tabular}}
		\end{table}
		
In Figure \ref{Fig.7} the empirical PDFs of both data series were compared with the PDFs obtained by fitting with the initial estimates, as well as with the ECF estimates of the Split--MA(1) process. As it can be easily seen, in both cases the ECF estimates provide better match to the empirical PDF. Note that the fitted PDF of the Series A (graph on the left) was estimated using the ECF procedure with a usual Gauss-Hermitian cubature, i.e., with the weight function $g_2(u_1,u_2)$ when $\gamma=2$. On the other hand, in the case of the Series B (graph on the right) cubature with the weight function $g_1(u_1,u_2)$ when $\gamma=1$ was used.

		\begin{figure}[hbtp]
			\begin{center}
				\includegraphics[width=1\textwidth]{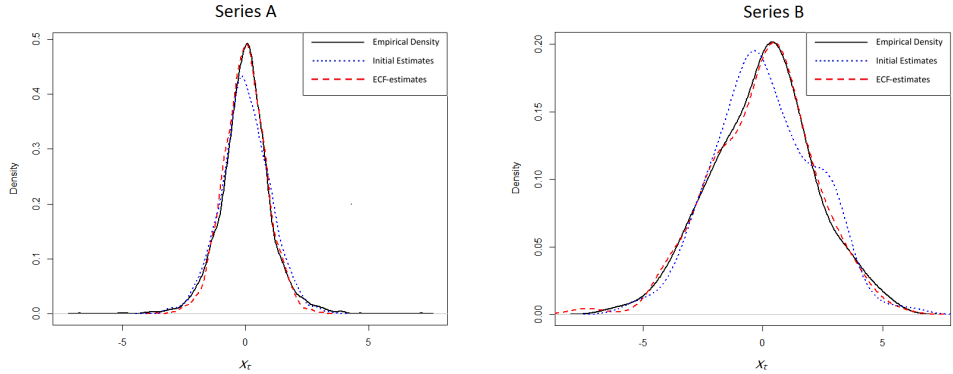}
				\caption{Empirical and fitted PDFs of the Split--MA(1) process.}\label{Fig.7}
			\end{center}
		\end{figure}

Finally, we explore the possibility of fitting the realizations of series  $(\eps_t)$ and $(m_t)$, as well as to compare them with the empirical series $(Y_t)$ and $(X_t)$. For this purpose, we apply the following iterative equations
\begin{equation}\label{PSB9}
\left\{%
\begin{array}{ll}
\eps_t=y_t-m_t \\
m_{t}=m_{t-1}+\eps_{t-1}I(\eps_{t-2}^2>\hat{c} )
\end{array}%
\right., \qquad t=2,\dots,T
\end{equation}
where $\hat{c}$ is the estimate of the critical value ${c}$,  obtained by using the previous ECF procedure, with cubature weight parameter  $\gamma=2$ (Series A), and $\gamma=1$ (Series B). On the other hand, we use
$\eps_1=\eps_0\stackrel{as}= 0$ and $m_1=\overline{y}_T$ as initial values of the iterative procedure (\ref{PSB9}),
where $\overline{y}_T$ is the empirical mean
of the series $(Y_t)$.	

A correlation between the log--volumes and the martingale means can also be seen in Figure \ref{Fig.8} (graph on the left). Obviously, the high correlation of the Series A implies greater fluctuations in the dynamics of martingale means series. In that way, it points to the empathic presence of the ``large shocks" in dynamics of the log--volumes. 	
On the other hand, it is obvious that Series B has a slightly more prominent stability and a smaller presence of the emphatic fluctuations in its dynamics. In the same Figure (graphs on the right) graphs of the realizations of the Split--MA(1) process $(X_t)$ and the innovations $(\eps_t)$ are shown.
					
\begin{figure}[htbp]
	\begin{center}
		\includegraphics[width=.9\textwidth]{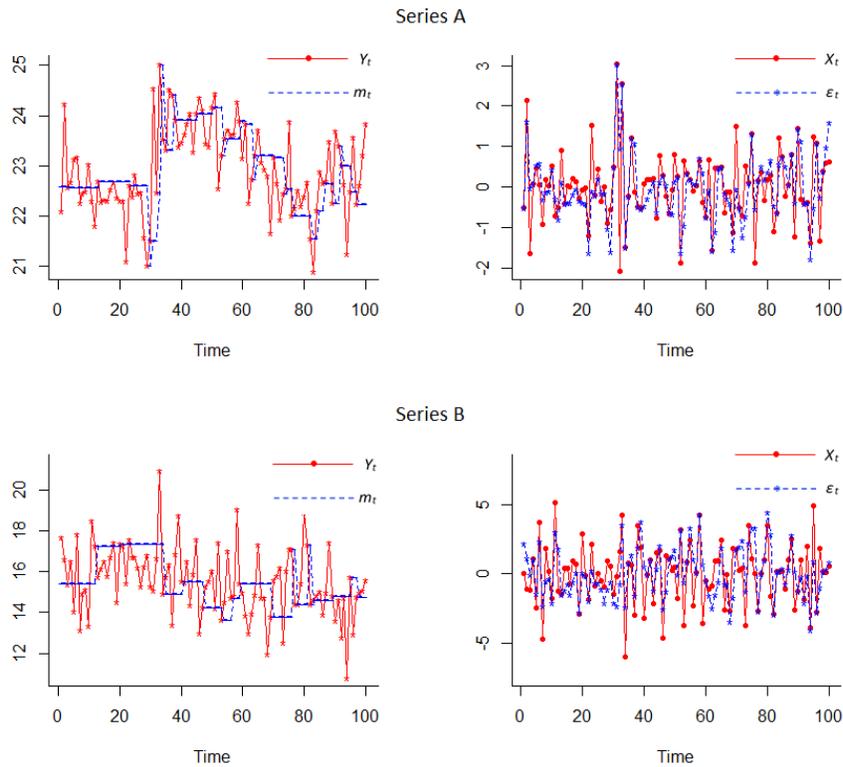}
		\caption{Comparative graphs of the actual and modeled data.}\label{Fig.8}
	\end{center}
\end{figure}

\section{Conclusion}
\label{sec:7}

In recent years, various modifications of the STOPBREAK process have been successfully applied to describe dynamics of time series with emphatic and permanent fluctuations. The previous statistical analysis confirms and justifies such a possibility in the case of the GSB process, where the Gaussian distribution for the innovations is assumed. In this paper, the GSB process has been used as a stochastic model in order to describe the behavior of the market capitalizations on the Serbian stock market. However, with certain modifications, this process can also be applied for estimation of similar time series (financial or any other).
	
\section*{Acknowledgements}
The authors would like to thank Belgrade Stock Exchange for the consignment of datasets and the constructive information regarding them. Authors are also very grateful to an anonymous referee for pointing out several mistakes in a preliminary version, together with valuable suggestions and comments that have allowed us to  improve the paper.


\begin{thebibliography}{30}

\bibitem[Balakrishnan et al.(2013)]{Balak.} N. Balakrishnan, M. R. Brito and A. J. Quiroz. On the goodness-of-fit procedure for normality based on the empirical characteristic function for ranked set sampling data. \textit{Metrika} \textbf{76}, 161--177 (2013).
					
					
\bibitem[Cvetkovi\'c and Milovanovi\'c(2004)]{GVM1} A. S. Cvetkovi\'c and G. V. Milovanovi\'c. The Mathematica Package
``OrthogonalPolynomials". \textit{Facta Univ. Ser. Math. Inform.} \textbf{19}, 17--36 (2004).
					
\bibitem[Dendramis et al.(2014)]{Dendramis1} Y. Dendramis, G. Kapetanios and E. Tzavalis. Level shifts in stock returns driven by large shocks. \textit{J. Empir. Finance} \textbf{29}, 41--51 (2014).
					
\bibitem[Dendramis et al.(2015)]{Dendramis2} Y. Dendramis, G. Kapetanios and E. Tzavalis. Shifts in volatility driven by large stock market shocks. \textit{J. Econom. Dynam. Control} \textbf{55}, 130--147 (2015).
					
\bibitem[Diebold(2001)]{Diebold} F. X. Diebold. Long memory and regime switching. \textit{J. Economet.} \textbf{105} (1), 131--159 (2001).
					
\bibitem[Engle and Smith(1999)]{STOPBREAK} R. F.\ Engle and A. D.\ Smith.  Stochastic Permanent Breaks. \textit{Rev. Econ. Stat.} \textbf{81}, 553--574 (1999).
					
\bibitem[Feuerverger(1990)]{Feuerverger} A. Feuerverger.  An efficiency result for the empirical characteristic function in stationary time-series models. \textit{Canadian Journal of Statistics} \textbf{18}, 155--161 (1990).

\bibitem[Gavrilov and Pusev(2014)]{Normtest} I. Gavrilov,  R. Pusev.  Tests for Normality. R package version 1.1, http://CRAN.R-project.org/package=normtest (2014).
					
\bibitem[Gonz\'ales(2004)]{SPS} A. Gonz\'alez. A smooth permanent surge process. \textit{SSE/EFI Working Paper
Series in Economics and Finance} 572, Stockholm.
					
\bibitem[Gonzalo and Martinez(2006)]{Gonzalo&Martinez} J.\ Gonzalo and O.\ Martinez.
Large shocks vs. small shocks (Or does size matter? May be so). \textit{J. Econometrics} \textbf{135}, 311--347 (2006).
					
\bibitem[Gross(2013)]{Gross}  L. Gross. Tests for Normality. R package version 1.0-2, http://CRAN.R-project.org/package=nortest (2013).
					
\bibitem[Hassler et al.(2014)]{Hassler} U. Hassler, P. M. M. Rodrigues and A. Rubia. Persistence in the banking industry: Fractional integration and breaks in memory. \textit{J. Empir. Finance} \textbf{29}, 95--112 (2014).
					
\bibitem[Huang and Fok(2001)]{Huang&Fok} B.\,-N.\ Huang and R. C. W.\ Fok. Stock market integration-an application of the stochastic permanent breaks model. \textit{Appl. Econom. Lett.} \textbf{8} (11), 725--729  (2001).
					
\bibitem[Kapetanios and Tzavalis(2010)]{Kapet&Tzav} G.\ Kapetanios and E.\ Tzavalis. Modeling structural breaks in economic relationships using large shocks. \textit{J. Econom. Dynam. Control} \textbf{34} (3), 417--436 (2010).
					
\bibitem[Knight and Satchell(1996)]{Knight&Satchell1} J. L.\ Knight and S. E.\ Satchell. Estimation of stationary stochastic processes via the empirical characteristic function. \textit{Manuscript, University of Western Ontario} (1996).
					
\bibitem[Knight and Satchell(1997)]{Knight&Satchell2}J. L.\ Knight and S. E.\ Satchell. The cumulant generating function estimation method. \textit{Econom. Theory} \textbf{13}, 170--184.
					
\bibitem[Knight et al.(2002)]{Knight et al.}J. L.\ Knight, S. E.\ Satchell and J. Yu. Estimation of the stochastic volatility model by te empirical characteristic function method. \textit{Aust. N. Z. J. Stat.} \textbf{44} (3), 319--335 (2002).
					
\bibitem[Knight and Yu(2002)]{Knight&Yu} J. L.\ Knight and J.\ Yu. Empirical characteristic function in time series estimation. \textit{Econom. Theory} \textbf{18}, 691--721  (2002).
					
\bibitem[Kotchoni(2012)]{Kotchoni1} R. Kotchoni. Applications of the characteristic function-based continuum GMM
					in finance. \textit{Comput. Statist. Data Anal.} \textbf{56}, 3599--3622 (2012).
					
\bibitem[Kotchoni(2014)]{Kotchoni2} R. Kotchoni. The indirect continuous-GMM estimation. \textit{Comput. Statist. Data Anal.} \textbf{76}, 464--488 (2014).
					
\bibitem[Mastroianni and Milovanovi\'c(2008)]{Mas_Mil2008} G. Mastroianni and  G. V. Milovanovi\'c. \textit{Interpolation Processes – Basic Theory and Applications}, Springer Monographs in Mathematics, Springer--Verlag, Berlin--Heidelberg (2008).
					
\bibitem[Milovanovi\'c(2015)]{GVM2015} G. V.\ Milovanovi\'c.  Construction and applications of Gaussian quadratures with nonclassical and exotic weight function. \textit{Stud. Univ. Babe\c{s}¸--Bolyai Math.} \textbf{60}, 211--233 (2015).
					
\bibitem[Milovanovi\'c and Cvetkovi\'c(2012)]{GVM2} G. V.\ Milovanovi\'c and A. S.\ Cvetkovi\'c. Special classes of orthogonal polynomials and corresponding quadratures of Gaussian type.  \textit{Math. Balkanica} \textbf{26}, 169--184 (2012).
					
\bibitem[Milovanovi\'c et al.(2014)]{GVM3} G. V.\ Milovanovi\'c, B. \v C.\ Popovi\'c and V. S.\ Stojanovi\'c. An application of the ECF method and numerical integration in estimation of the stochastic volatility models. \textit{Facta Univ. Ser. Math. Inform.} {\bf 29} (3), 295--311 (2014).

\bibitem[Newey and McFadden(1994)]{Newey} W. K.\ Newey and D.\ McFadden. \textit{Large sample estimation and hypothesis testing: in  Handbook of Econometrics}. Vol. 4. Amsterdam: North-Holland (1994).

\bibitem[Serfling(1980)]{Serfling} R. Serfling. \textit{Approximation Theorems of Mathematical Statistics}, John Wiley \& Sons, Inc. (1980).
					
\bibitem[Singleton(2001)]{Singleton} K. Singleton. Estimation of affine pricing models using the empirical characteristic function. \textit{J. Econometrics} \textbf{102}, 111--141 (2001).
					
\bibitem[Stojanovi\' c et al.(2011)]{Split-BREAK} V.\ Stojanovi\'c, B.\ Popovi\'c and P.\ Popovi\'c.
The Split-BREAK model. \textit{Braz. J. Probab. Stat.} \textbf{25} (1), 44--63  (2011).
					
\bibitem[Stojanovi\' c et al.(2014)]{GSB1} V.\ Stojanovi\'c,  B.\ Popovi\'c and P.\ Popovi\'c. Stochastic analysis of GSB process. \textit{Publ. Inst. Math.(Beograd)(N.S.)} \textbf{95} (109), 149–-159 (2014).
					
\bibitem[Stojanovi\' c et al.(2015)]{GSB2} V.\ Stojanovi\'c,  B.\ Popovi\'c and P.\ Popovi\'c. Model of General Split-BREAK process. \textit{REVSTAT} \textbf{13} (2), 145–-168 (2015).

\bibitem[Stojanovi\' c et al.(2016)]{Split-SV} V. S.\ Stojanovi\' c, B. \v C.\ Popovi\' c and G. V.\ Milovanovi\'c. The Split-SV model. \textit{Comput. Statist. Data Anal.} \textbf{100}, 560--581 (2016).
					
\bibitem[Yu(2004)]{Yu} J.\ Yu. Empirical Characteristic Function Estimation and Its Applications, \textit{Econometric Rev.} \textbf{23} (2), 93--123  (2004).
									
\end{thebibliography}
\end{document}